\documentclass[11pt,a4paper]{article}

\usepackage{cmap}
\usepackage[T1]{fontenc}

\usepackage{tocloft}

\usepackage{amsmath,amsthm,amssymb,latexsym,graphicx,mathrsfs}
\usepackage{secdot}
\sectiondot{subsection}
\usepackage{a4wide}
\usepackage{float}

\usepackage{mathtools}
\mathtoolsset{showonlyrefs}
\allowdisplaybreaks

\usepackage{marginnote}

\newcommand{\doi}[1]{\href{http://dx.doi.org/#1}{doi:#1}}

\usepackage{xcolor}
\usepackage{url}
\usepackage{hyperref}
\definecolor{ForestGreen}{rgb}{0.1,0.6,0.05}
\definecolor{EgyptBlue}{rgb}{0.063,0.1,0.6}
\definecolor{RipeOlive}{HTML}{556B2F}
\hypersetup{
	colorlinks=true,
	linkcolor=EgyptBlue,         
	citecolor=ForestGreen,
	urlcolor=RipeOlive
}

\usepackage[hyperpageref]{backref}
\usepackage{epstopdf}
\newcounter{dummy}
\usepackage{enumitem}
\makeatletter
\newcommand\myitem[1][]{\item[#1]\refstepcounter{dummy}\def\@currentlabel{#1}}
\makeatother

\usepackage{orcidlink}

\newtheorem{theorem}{Theorem}
\newtheorem{proposition}[theorem]{Proposition}
\newtheorem{lemma}[theorem]{Lemma}
\newtheorem{corollary}[theorem]{Corollary}

\theoremstyle{definition}

\newtheorem{openproblem}[theorem]{Open problem}

\numberwithin{equation}{section}
\numberwithin{theorem}{section}

\numberwithin{equation}{section}
\numberwithin{theorem}{section}

\newenvironment{proof*}[1]{\begin{trivlist}\item[\hskip%
		\labelsep{{\bf Proof of \/{\rm\bf #1.}}~}]\rm}%
	{\hfill\qed\rm\end{trivlist}}

\newcommand{\W}{W_0^{1,p}}

\newcommand{\intO}{\int_\Omega}

\newcommand{\Om}{\Omega}

\title{
	\vspace*{-2cm}
	Remarks on the antimaximum principle
} 
\author{Vladimir Bobkov\\}
\date{}

\AtEndDocument{%
	\par
	\bigskip
	\bigskip
	\begin{tabular}{@{}l@{}}%
		(V.~Bobkov)\\[0.2em]
		\textsc{Institute of Mathematics, Ufa Federal Research Centre, RAS}\\
		\textsc{Chernyshevsky str. 112, Ufa 450008, Russia;}\\
		\textsc{Ufa University of Science and Technology}\\
		\textsc{Zaki Validi str. 32, Ufa 450076, Russia.}\\[0.3em]
		\orcidlink{0000-0002-4425-0218} 0000-0002-4425-0218\\[0.3em]
		\textit{E-mail address}: \texttt{bobkov@matem.anrb.ru}, \texttt{bobkovve@gmail.com}
\end{tabular}}

\begin{document}
	\maketitle
	\vspace*{-5ex}
	\begin{abstract}
		
		We present several observations on the antimaximum principle (AMP) for the model problem $-\Delta_p u = \lambda |u|^{p-2} u + f$ in a bounded smooth domain $\Omega$, subject to the zero Dirichlet boundary conditions, and where the source function $f$ is nontrivial, nonnegative, and sufficiently regular. 
		Denote by $\lambda_f$ the endpoint of validity of the AMP, so that every solution of the problem is negative in $\Omega$ for any $\lambda \in (\lambda_1,\lambda_f)$. 
		Our discussion covers the following aspects: identification of a class of sources over which the AMP is uniform, lower semicontinuity of the map $f \mapsto \lambda_f$, bounds on $\lambda_f$, the nonexistence of negative solutions for sufficiently large $\lambda$ (extended AMP), the anticomparison principle, and the weakening of the source regularity from the Lebesgue to Morrey spaces. 
		Some of the results are stated only in the linear case $p=2$. 
		As a part of the discussion, we provide a few related open problems.

		\par
		\smallskip
		\noindent {\bf  Keywords}: 
		antimaximum principle; anticomparison principle; $p$-Laplacian; Morrey spaces.
		
		\smallskip
		\noindent {\bf MSC2020}: 
		35J92,	
		35B50,	
		35B65,	
		35B09,	
		35B30.	
	\end{abstract}
	
	\begin{quote}	
		\tableofcontents	
		\addtocontents{toc}{\vspace*{-2ex}}
	\end{quote}

\section{Introduction and main results}\label{sec:introduction}

	Consider the boundary value problem
	\begin{equation}\label{D}
	\tag{$\mathcal{D}_\lambda$}
		\left\{
		\begin{aligned}
			-\Delta_p u &= \lambda |u|^{p-2}u + f(x) 
			&&\text{in}\ \Omega, \\
			u&=0 &&\text{on}\ \partial \Omega,
		\end{aligned}
		\right.
	\end{equation}
	where $p>1$ and $\Omega \subset \mathbb{R}^N$ is a bounded domain of class $C^{1,1}$, $N \geq 1$.
	If not explicitly mentioned otherwise, we always assume that 
	\begin{equation}\label{eq:F}
	f \geq 0, ~ f\not\equiv0 ~~\text{in}~ \Omega, 
	\quad \text{and} \quad 
	f \in L^\gamma(\Omega) ~~\text{for some}~ \gamma>N. 
	\end{equation}
	Hereinafter, $\gamma$ is fixed. 
	Denote by $\lambda_1$ and $\lambda_2$ the \textit{first} and \textit{second} eigenvalues of the $p$-Laplacian in $\Omega$. 
	Sometimes we will also use the expanded notation $\lambda_1(\Omega)$ to represent the dependence on $\Omega$. 
	These eigenvalues admit the following characterizations:
	\begin{equation}\label{eq:lambda1}
	\lambda_1 = 
	\inf\left\{
	\frac{\int_\Omega |\nabla u|^p \, dx}{\int_\Omega |u|^p \, dx}:~
	u \in W_0^{1,p}(\Omega) \setminus \{0\}
	\right\}
	\end{equation}
	and (see, e.g., \cite{Bob1})
	\begin{equation}\label{eq:lambda2-char}
			\lambda_2 
			=
			\inf 
			\left\{
			\max
			\left\{
			\frac{\int_\Omega |\nabla u^+|^p \,dx}{\int_\Omega |u^+|^p \, dx},
			\frac{\int_\Omega |\nabla u^-|^p \,dx}{\int_\Omega |u^-|^p \, dx}
			\right\}:~
			u \in W_0^{1,p}(\Omega), ~ u^\pm \not\equiv 0
			\right\},
		\end{equation}
	where we use the notation $a^\pm = \max\{\pm a, 0\}$. 
	It is known that $0 < \lambda_1 < \lambda_2$ \cite{anane1987}, the minimizer $\varphi_1 \in W_0^{1,p}(\Omega)$ of $\lambda_1$ (a.k.a.\ the first eigenfunction) is a weak solution of $(\mathcal{D}_{\lambda_1})$ with $f=0$, it is unique modulo scaling \cite{Alleg,Lind}, $\varphi_1 \in C^{1,\beta}(\overline{\Omega})$ for some $\beta \in (0,1)$ \cite{Lieberman}, and it can be chosen such that
	\begin{equation}\label{eq:bpl}
	\varphi_1 > 0 \quad \text{in}~ \Omega \quad \text{and} \quad \partial_\nu \varphi_1 < 0 \quad \text{on}~ \partial \Omega,
	\end{equation}
	where $\nu$ is the outer unit normal vector to $\partial \Omega$ \cite{vaz}.
	We always assume that $\varphi_1$ is normalized in $L^p(\Omega)$ as  $\|\varphi_1\|_{p}=1$.  
	Hereinafter, except in Section~\ref{sec:Morrey}, we always work with \textit{weak} solutions of \eqref{D}. 
	Such solutions do exist for any $\lambda \in (\lambda_1,\lambda_2)$, see, e.g., \cite[Lemma~4.6]{BDI} or \cite[Theorem~2.20]{BobkovTanakaAM}.
	Moreover, any weak solution belongs to $C^{1,\beta}(\overline{\Omega})$ for some $\beta \in (0,1)$, see \cite[Propositions~A.1 and~A.3]{BobkovTanakaAM}.
		
	\medskip
	The sign of a solution is among fundamental qualitative properties of \eqref{D}.   
	It is well known that the following \textit{maximum principle} \eqref{MP} is valid, see, e.g., \cite{vaz}: 
	\begin{equation}\label{MP}
		\tag{$\mathcal{MP}$}
		\text{If $\lambda < \lambda_1$, then any solution $u$ of \eqref{D} satisfies $u>0$ in $\Omega$ and $\partial_\nu u <0$ on $\partial \Omega$}.
	\end{equation}
	In contrast, it was observed by Cl\'ement \& Peletier \cite{CP} for $p=2$ and by Fleckinger et al.\ \cite{FGTT} for $p>1$ (see also \cite{BobkovTanakaAM} for the regularity assumption as in \eqref{eq:F}) that the following \textit{antimaximum principle} \eqref{AMP} holds:  
	\begin{equation}
	\tag{$\mathcal{AMP}$}
	\label{AMP}
	\begin{split}
		&\text{There exists $\lambda_f > \lambda_1$ such that if $\lambda \in (\lambda_1, \lambda_f)$, then}\\
		&\text{any solution $u$ of \eqref{D} satisfies $u < 0$ in $\Omega$ and $\partial_\nu u > 0$ on $\partial \Omega$.}
		\end{split}
	\end{equation}
This version of \ref{AMP} is sometimes called \textit{strong}. 
Weaker versions are also of independent interest; in these versions, the inequality $\partial_\nu u > 0$ on $\partial \Omega$ is omitted and/or $u<0$ is replaced by $u \leq 0$ in $\Omega$. 

We will always assume that $\lambda_f$ is the maximal value such that \ref{AMP} holds. 
In the linear case $p=2$, the Fredholm alternative states that \eqref{D} has a solution for any nonresonant $\lambda$, which yields $\lambda_f \leq \lambda_2$, see, e.g., \cite{BDI}.  
However, the nonlinear case $p \neq 2$ raises the following subtlety: we do not know whether, in general, \eqref{D} has solutions for \textit{at least some} $\lambda \geq \lambda_2$. 
It might hypothetically happen that \ref{AMP} as written above holds for any $\lambda>\lambda_1$, while there are no solutions of \eqref{D} when $\lambda$ is sufficiently large. 
We prefer to avoid such a ``trivial'' scenario, and hereinafter we \textit{always} require in \ref{AMP} the existence of solutions of $(\mathcal{D}_{\widetilde\lambda_n})$ for a sequence $\widetilde\lambda_n \nearrow \lambda_f$. 
In that way, $\lambda_f$ can be equivalently defined as 
\begin{align}
\lambda_f = 
\sup
\Big\{
a>\lambda_1:~ 
&\text{$(\mathcal{D}_{a})$ has a solution, and for any $\lambda \in (\lambda_1,a]$}\\
&\text{every solution $u$ of \eqref{D} satisfies $u < 0$ in $\Omega$ and $\partial_\nu u > 0$ on $\partial \Omega$}
\Big\}.~~~~
\label{eq:lambdaf}
\end{align}
This definition refines the one considered in \cite{BDI} by the additional existence assumption. 

The antimaximum principle was investigated in \cite{CP} in the linear case $p=2$ via the Green's function analysis. 
The general nonlinear case $p>1$ was covered in \cite{FGTT} by different arguments based on  compactness analysis, and the core of that approach is adopted in the present work. 
After \cite{CP,FGTT}, various aspects of \ref{AMP} were subsequently developed, and we refer to \cite{BDI,BobkovTanakaAM,DFG,FleckGossThel,FHT2014,hamel-nadir,sweers} for some of them. 
Extensions to more general domains and boundary conditions were studied in \cite{ADSS,ACG,BvonBR}. 
Alternative approaches to \ref{AMP} were considered in, e.g., \cite{AroraGluck,korman2019,shi,takac-abstract}. 
Particular attention was paid to \ref{AMP} in indefinite weight settings, see \cite{GGP,IR,pinch,TanakaAMP2012}. 
We also refer to the overview \cite{M}. 

Although it may seem that \ref{AMP} has been comprehensively studied, several of its fundamental properties have not been investigated, especially in the nonlinear case.  
For instance, analytic properties of the map $f \mapsto \lambda_f$ do not seem to have been systematically studied so far, and even the finiteness of $\lambda_f$ remains unknown in the general case $p \neq 2$, see \cite{BDI} and Section~\ref{sec:properties} below for more details. 
This is mainly due to the nonconstructive definition of $\lambda_f$; compare \eqref{eq:lambdaf} with \eqref{eq:lambda1} and \eqref{eq:lambda2-char}. 

The aim of the present work is to obtain some information in this regard. 
Apart from investigating the analytic properties of $\lambda_f$, such as uniform lower bounds on $\lambda_f$ on classes of source functions (a.k.a.\ uniform \ref{AMP}), lower semicontinuity, monotonicity, and upper bounds, 
we also generalize the notion of \ref{AMP} by introducing the \textit{extended} \ref{AMP} (which is about the nonexistence of negative solutions for sufficiently large $\lambda$) and the \textit{anticomparison} principle. 
Moreover, we show that the Lebesgue regularity of $f$ in the assumption \eqref{eq:F} can be weakened to an appropriate regularity in the Morrey spaces, so that \ref{AMP} holds for strong solutions of \eqref{D}. 

The work has the following structure. 
In Sections~\ref{sec:properties}, \ref{sec:EAMP}, \ref{sec:anticomparison}, \ref{sec:Morrey}, we provide our main results on \ref{AMP}.
Section~\ref{sec:auxiliary} contains a few important auxiliary statements.  
Finally, Section~\ref{sec:proofs} is devoted to proofs of the main results.

\subsection{Properties of \texorpdfstring{$\lambda_f$}{lambda-f}}\label{sec:properties}

It is known that $\lambda_f$ depends on $f$ in such a way that it cannot be separated from $\lambda_1$ \textit{uniformly} with respect to all sources satisfying \eqref{eq:F}, see \cite{ACG,BDI,DFG}. 
In contrast, such a uniform version of \ref{AMP} is known under the Neumann boundary conditions. 
Namely, let us denote by $\mu_f$ the endpoint of validity of \ref{AMP} in the Neumann case, in analogy with $\lambda_f$ in \eqref{eq:lambdaf}. 
It was shown in \cite{CP} that $\mu_f \geq \mu_2/4$ when $p=2$ and $N=1$, where $\mu_2$ is the second (or, equivalently, the first nonzero) eigenvalue of the Neumann Laplacian.
Later, it was proved in \cite{ACG} that if $p>N$, then 
\begin{equation}\label{eq:muf-bound}
\mu_f > 
\inf\left\{
\frac{\int_\Omega |\nabla u|^p \, dx}{\int_\Omega |u|^p \, dx}:~
u \in W^{1,p}(\Omega) \setminus \{0\}
\text{ and }
u
\text{ vanishes on some ball in }
\Omega
\right\} > 0,
\end{equation}
and no uniform lower bound for $\mu_f$ over $f$ is possible when $p \leq N$.

In our first result, we identify a natural class of sources in which 
\ref{AMP} (in the present Dirichlet setting) is uniform. 
For this purpose, we note that the map $f \mapsto \lambda_f$ is zero-homogeneous (i.e., scale-invariant), that is, 
\begin{equation}\label{eq:lambdacf}
\lambda_{cf}=\lambda_f \quad \text{for any}~ c > 0,
\end{equation}
and define the zero-homogeneous functional 
$$
\Theta(f)
=
\frac{\int_\Omega f\varphi_1\,dx}{\|f\|_\gamma}.
$$
Since $\varphi_1>0$ in $\Omega$, we have $\Theta(f) > 0$ for every $f$ satisfying \eqref{eq:F}.
For any $\eta>0$, consider the class of sources
$$
\mathcal{U}_\eta
=
\{f:~ f ~\text{satisfies \eqref{eq:F} and}~ \Theta(f) \geq \eta \}. 
$$
In Lemma~\ref{lem:properties-of-U} below, we show that $\mathcal{U}_\eta \cup \{0\}$ is a convex cone having a weak closure property. 

The following theorem states that \ref{AMP} is uniform over $\mathcal{U}_\eta$. 
\begin{theorem}\label{thm:UAMP}
For any $\eta>0$, there exists $\varepsilon>0$ such that 
$$
\lambda_f \geq \lambda_1 + \varepsilon
\quad \text{for every}~ f \in \mathcal{U}_\eta.
$$
\end{theorem}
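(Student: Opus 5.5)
We argue by contradiction, following the compactness approach of \cite{FGTT}. Suppose the claim fails for some $\eta>0$. Then there are $f_n \in \mathcal{U}_\eta$ with $\lambda_{f_n} \to \lambda_1$. By the zero-homogeneity \eqref{eq:lambdacf} we may normalize $\|f_n\|_\gamma = 1$, so that $\int_\Omega f_n\varphi_1\,dx \geq \eta$.

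Since $\lambda_{f_n} < \lambda_1 + 1/n$, the definition \eqref{eq:lambdaf} supplies, for each $n$, some $\lambda_n \in (\lambda_1, \lambda_1+1/n]$ and a solution $u_n$ of $(\mathcal{D}_{\lambda_n})$ with $f=f_n$ that fails to be negative with $\partial_\nu u_n>0$. Here one must check that existence is not the obstruction. Solutions exist for every $\lambda\in(\lambda_1,\lambda_2)$ by \cite[Lemma~4.6]{BDI}. Hence, if every solution for every $\lambda\in(\lambda_1,\lambda_1+1/n]$ were negative with positive normal derivative, we would get $\lambda_{f_n}\ge \lambda_1+1/n$, a contradiction.

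\textbf{Blow-up step.} We first show $\|u_n\|_\infty \to \infty$, or more precisely $\|u_n\|_{C^1}\to\infty$. Suppose instead that $u_n$ stays bounded in $L^\infty$. The regularity estimates \cite[Propositions~A.1 and~A.3]{BobkovTanakaAM} give a uniform $C^{1,\beta}(\overline{\Omega})$ bound, since $\|f_n\|_\gamma=1$ with $\gamma>N$. Along a subsequence, $u_n\to u$ in $C^1$ and $f_n\rightharpoonup f$ weakly in $L^\gamma$. Then $u$ solves $(\mathcal{D}_{\lambda_1})$ with source $f$, and $\int_\Omega f\varphi_1\,dx\ge\eta>0$. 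Testing the equation with $\varphi_1$ via the standard Picone/Fredholm-type identity shows that $(\mathcal{D}_{\lambda_1})$ has no solution when $\int f\varphi_1>0$ (for $p=2$ this is Fredholm; for general $p$ it is known from \cite{FGTT}). This contradiction shows the blow-up.

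\textbf{Normalized limit.} Set $v_n = u_n/\|u_n\|_\infty$. Then
\[
-\Delta_p v_n = \lambda_n |v_n|^{p-2}v_n + f_n/\|u_n\|_\infty^{p-1},
\]
and the source term tends to $0$ in $L^\gamma$. By the uniform $C^{1,\beta}$ bounds, $v_n\to v$ in $C^1(\overline{\Omega})$ with $\|v\|_\infty=1$, and $v$ is an eigenfunction for $\lambda_1$. So $v=\pm c\varphi_1$ with $c>0$.

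\textbf{Ruling out $v=+c\varphi_1$.} This is the key step and the expected main obstacle. Test the equation for $u_n$ with $\varphi_1$ and compare with $\lambda_1$. For $p=2$ this gives directly
\[
(\lambda_n-\lambda_1)\int_\Omega u_n\varphi_1\,dx = -\int_\Omega f_n\varphi_1\,dx \le -\eta<0,
\]
so $\int_\Omega v_n\varphi_1<0$ and hence $v=-c\varphi_1$. For $p\neq2$ the identity must be replaced by the argument of \cite{FGTT}: if $v_n\to c\varphi_1>0$ in $C^1$, then $u_n>0$ for large $n$. Applying Picone's inequality with $\varphi_1$ and $u_n$ gives
\[
0\le\int_\Omega |\nabla\varphi_1|^p - \int_\Omega \frac{-\Delta_p u_n}{u_n^{p-1}}\varphi_1^p
=(\lambda_1-\lambda_n)-\int_\Omega \frac{f_n\varphi_1^p}{u_n^{p-1}}<0,
\]
which is a contradiction. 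Note that this step needs no uniformity. The uniformity over $\mathcal U_\eta$ is used only in the blow-up step, where the weak limit $f$ must be nontrivial in the sense $\int f\varphi_1\ge\eta$. This is exactly what fails for general sources: there one could have $f_n\rightharpoonup0$, allowing bounded non-negative solutions.

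\textbf{Conclusion.} With $v=-c\varphi_1$, convergence $v_n\to-c\varphi_1$ in $C^1(\overline{\Omega})$ together with \eqref{eq:bpl} yields $v_n<0$ in $\Omega$ and $\partial_\nu v_n>0$ on $\partial\Omega$ for large $n$. The same then holds for $u_n$, contradicting the choice of $u_n$. The delicate point to verify is the nonexistence at $\lambda_1$ for general $p$ with a merely weakly converging source. I expect it to follow again from Picone applied to $-u$ and $\varphi_1$ after establishing the sign of $u$, or alternatively by running the blow-up directly with $\lambda_n\to\lambda_1$ without passing to the limit problem.
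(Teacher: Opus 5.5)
Your proposal is correct and is essentially the paper's argument. The paper also argues by contradiction with $\|f_n\|_\gamma=1$, keeps the weak limit in $\mathcal{U}_\eta$ via Lemma~\ref{lem:properties-of-U}, and then invokes Theorem~\ref{thm:AMP}, whose proof is exactly your sequence of steps: blow-up via nonexistence at $\lambda_1$, convergence of the normalized solutions to $\pm\varphi_1$, and exclusion of $+\varphi_1$ by Picone. The paper normalizes in $L^p$ rather than $L^\infty$, which makes no difference.

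One precision. For $p\neq2$, nonexistence at $\lambda_1$ genuinely needs $f\ge0$, $f\not\equiv0$, not merely $\int_\Omega f\varphi_1\,dx>0$, and testing with $\varphi_1$ does not give it when $p\neq2$. Your weak limit does satisfy $f\ge0$, since nonnegativity survives weak limits, so the result you cite applies. The "delicate point" you flag is also not an issue: nonexistence concerns the fixed limit source $f$, so the weak convergence of $f_n$ adds no difficulty there.
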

In other terms, Theorem~\ref{thm:UAMP} states that if $\{f_n\}$ satisfies \eqref{eq:F}, then 
\begin{equation}\label{eq:uniformAMPoneside}
\lambda_{f_n} \to \lambda_1 \quad \text{implies} \quad \Theta(f_n) \to 0.
\end{equation}
In Lemma~\ref{lem:implication} below, we show that the reverse implication in \eqref{eq:uniformAMPoneside} is not generally possible, cf.\ \eqref{eq:uniformAMPanotherside}. 
This means that the class $\mathcal{U}_\eta$ is sufficient, but not necessary for the validity of the uniform \ref{AMP}.

The result of Theorem~\ref{thm:UAMP} helps to obtain a kind of weak lower semicontinuity of the map $f \mapsto \lambda_f$.
\begin{theorem}\label{thm:LSC}
Let $\{f_n\}$ and $f$ satisfy \eqref{eq:F}, and $f_n\to f$ weakly in $L^\gamma(\Omega)$.
Then
$$
 \min\{\lambda_f,\lambda_2\} \leq \liminf_{n\to+\infty}\lambda_{f_n}.
$$
Consequently, if $\lambda_f\le\lambda_2$, then the map $g\mapsto\lambda_g$ is weakly lower semicontinuous at $f$.
\end{theorem}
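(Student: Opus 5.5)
Set $\Lambda=\min\{\lambda_f,\lambda_2\}$ and argue by contradiction. Suppose, after passing to a subsequence, that $\lambda_{f_n}\to\ell<\Lambda$. Since $\gamma>N$, weak convergence in $L^\gamma$ gives $\int_\Omega f_n\varphi_1\,dx\to\int_\Omega f\varphi_1\,dx>0$, and $\|f_n\|_\gamma$ stays bounded by uniform boundedness. Hence $\Theta(f_n)\geq\eta>0$ for all large $n$, so all but finitely many $f_n$ lie in some $\mathcal{U}_\eta$. Theorem~\ref{thm:UAMP} then gives $\lambda_{f_n}\geq\lambda_1+\varepsilon$, and therefore $\ell\in[\lambda_1+\varepsilon,\Lambda)$. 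In particular $\ell>\lambda_1$, which rules out the degenerate case.

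Fix $\ell<\lambda_f$. By the definition \eqref{eq:lambdaf}, $\lambda_{f_n}$ is a supremum, so for every $n$ at least one of two things happens for some $\mu_n\in(\lambda_{f_n},\lambda_{f_n}+1/n)$: either (a) $(\mathcal{D}_{\mu_n})$ with source $f_n$ has a solution $u_n$ that is not strictly negative with $\partial_\nu u_n>0$, or (b) $(\mathcal{D}_{\mu_n})$ has no solution. We have $\mu_n\to\ell\in(\lambda_1,\lambda_2)$. Solutions exist for every $\lambda\in(\lambda_1,\lambda_2)$ (cited in the introduction), so case (b) cannot occur for large $n$. Thus we obtain "bad" solutions $u_n$ with $\mu_n\to\ell$.

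The key step is passing to the limit with these $u_n$. I first need a priori bounds in $L^\infty$, and then in $C^{1,\beta}(\overline\Omega)$, for solutions with $\lambda$ in a compact subset of $(\lambda_1,\lambda_2)$ and sources bounded in $L^\gamma$. I would argue by contradiction with normalization. If $\|u_n\|_\infty\to\infty$, set $v_n=u_n/\|u_n\|_\infty$. Then $v_n$ solves the equation with eigenvalue $\mu_n$ and source $f_n/\|u_n\|_\infty^{p-1}\to0$ in $L^\gamma$. Regularity theory (\cite{Lieberman}, \cite[Propositions~A.1, A.3]{BobkovTanakaAM}) gives precompactness in $C^1(\overline\Omega)$, so a limit $v$ exists with $\|v\|_\infty=1$ solving $-\Delta_p v=\ell|v|^{p-2}v$. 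This makes $\ell\in(\lambda_1,\lambda_2)$ an eigenvalue, which contradicts the fact that no eigenvalues lie in this gap (via \eqref{eq:lambda2-char}). The obstacle I expect is precisely this step: making the nonresonance argument and the uniform $C^{1,\beta}$ estimates rigorous for $p\neq2$, including the case where an $L^p$ normalization is used first.

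With bounded $u_n$, the compact embedding of $C^{1,\beta}$ into $C^1$ gives $u_n\to u$ in $C^1(\overline\Omega)$. Passing to the limit in the weak formulation, using $f_n\rightharpoonup f$, shows that $u$ solves $(\mathcal{D}_\ell)$ with source $f$. Since $\ell\in(\lambda_1,\lambda_f)$, \eqref{AMP} gives $u<0$ in $\Omega$ and $\partial_\nu u>0$ on $\partial\Omega$, so $u$ lies in the interior of the negative cone in $C^1_0(\overline\Omega)$. This cone interior is open, so $u_n<0$ and $\partial_\nu u_n>0$ for large $n$. That contradicts the choice of $u_n$ and proves $\Lambda\leq\liminf\lambda_{f_n}$.

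Finally, if $\lambda_f\le\lambda_2$, then $\Lambda=\lambda_f$, which is exactly weak lower semicontinuity at $f$.
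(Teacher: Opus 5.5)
Your proposal is correct and follows essentially the paper's proof. Theorem~\ref{thm:UAMP} keeps the relevant parameters at distance $\varepsilon$ from $\lambda_1$; a compactness step (your inline $L^\infty$-normalized version of Proposition~\ref{prop:AMP-convergence}) passes the violating solutions to a $C^1$ limit solving $(\mathcal{D}_\ell)$ with source $f$; and openness of the strict \ref{AMP} condition in $C^1$ gives the contradiction.

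One small correction: since $\mu_n>\lambda_{f_n}$ is not in the set \eqref{eq:lambdaf}, the definition only gives a violating solution at some parameter in $(\lambda_1,\mu_n]$, hence in $[\lambda_{f_n},\mu_n]$, and not necessarily at $\mu_n$ itself. These parameters still converge to $\ell$, so nothing changes. The paper avoids this point by fixing $a<\min\{\lambda_f,\lambda_2\}$ and taking violating parameters in $[\lambda_1+\varepsilon,a]$.
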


The inequality $\lambda_f\le\lambda_2$ is true, for instance, in the linear case $p=2$, see, e.g., \cite{BDI}. 
However, it is not even known whether $\lambda_f < +\infty$ in the general nonlinear setting. 
Let us discuss the issue of boundedness of $\lambda_f$ in more detail. 
Consider the critical value
\begin{equation}\label{eq:l*}
\lambda^*_f := \inf\left\{
\frac{\int_\Omega |\nabla u|^p \, dx}{\int_\Omega |u|^p \, dx}:~
\int_\Omega f u \, dx = 0, 
~
u \in W_0^{1,p}(\Omega) \setminus \{0\}
\right\}.
\end{equation}
It is proved in \cite[Theorem~1.1]{BDI} that $\lambda_f^* \in (\lambda_1, \lambda_2]$ and the following assertions hold:
	\begin{enumerate}[label={\rm(\roman*)}]
		\item\label{thm:1:2} If $\lambda_f^* < \lambda_2$, then $\lambda_f < \lambda_f^*$. 
		In particular, if $\{f_n\}$ satisfies \eqref{eq:F}, then (cf.\ \eqref{eq:uniformAMPoneside})
		\begin{equation}\label{eq:uniformAMPanotherside}
		\lambda^*_{f_n} \to \lambda_1 \quad \text{implies} \quad \lambda_{f_n} \to \lambda_1.
		\end{equation}
		\item\label{thm:1:1} If $p = 2$ or $N=1$, then $\lambda_f \leq \lambda_f^*$.
	\end{enumerate}
Notice that $\lambda_f < \lambda_f^* < \lambda_2$, provided there exists a second eigenfunction $\varphi_2$ such that $\int_\Omega f \varphi_2 \,dx \neq 0$, see \cite[Proposition~1.2]{BDI}.

We add one more case guaranteeing the boundedness of $\lambda_f$.  
Denote
$$
\Omega_f^0 = \bigcup 
\left\{
U \subset \Omega:~ U ~\text{is open and}~ f=0 ~\text{a.e.\ in}~ U
\right\}.
$$
If $f$ is continuous, then we equivalently have $\Omega_f^0 = \text{Int}(\{x \in \Omega:\, f(x)=0\})$.
\begin{proposition}\label{prop:upperbound}
Let $\Omega_f^0 \neq \emptyset$. 
Then $\lambda_f \leq \lambda_1(\Omega_f^0) < +\infty$.  
\end{proposition}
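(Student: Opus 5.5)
The plan is to argue by contradiction using Picone's identity on the open set $\Omega_f^0$, where the equation for $-u$ reduces to an eigenvalue-type equation because $f$ vanishes there. Write $\mu := \lambda_1(\Omega_f^0)$, understood as
$$
\mu = \inf\left\{ \frac{\int |\nabla \phi|^p\,dx}{\int |\phi|^p\,dx} :~ \phi \in C_c^\infty(\Omega_f^0)\setminus\{0\} \right\},
$$
which is the same as the infimum over $W_0^{1,p}(\Omega_f^0)$ by density. Finiteness is immediate: $\Omega_f^0$ is a nonempty open set, so it contains a ball $B$, and any nonzero $\phi \in C_c^\infty(B)$ gives a finite quotient. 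Domain monotonicity also gives $\mu \geq \lambda_1$, since $C_c^\infty(\Omega_f^0) \subset W_0^{1,p}(\Omega)$. Working with the $C_c^\infty$ characterization means I never need regularity of $\Omega_f^0$, attainment of the infimum, or connectedness.

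Suppose, to the contrary, that $\lambda_f > \mu$. By the definition \eqref{eq:lambdaf} of $\lambda_f$ as a supremum, there is some $a \in (\mu, \lambda_f]$ with $a > \lambda_1$ belonging to the defining set. Thus $(\mathcal{D}_a)$ has a solution $u$, and every such solution satisfies $u<0$ in $\Omega$ and $\partial_\nu u>0$ on $\partial\Omega$. Set $v = -u$. Then $v \in C^{1,\beta}(\overline{\Omega})$, $v>0$ in $\Omega$, and $v$ is a weak solution of $-\Delta_p v = a v^{p-1} - f$ in $\Omega$. Since $f = 0$ a.e.\ in $\Omega_f^0$, we get
$$
\int_\Omega |\nabla v|^{p-2}\nabla v\cdot\nabla \psi\,dx = a\int_\Omega v^{p-1}\psi\,dx
$$
for every $\psi \in W_0^{1,p}(\Omega)$ with support in a compact subset of $\Omega_f^0$.

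Now fix $\phi \in C_c^\infty(\Omega_f^0)$. On the compact set $K = \operatorname{supp}\phi \subset \Omega$, continuity and positivity of $v$ give $v \geq c_K > 0$. Hence $\psi = |\phi|^p/v^{p-1}$ is Lipschitz with compact support in $\Omega_f^0$, so it is an admissible test function. Picone's identity for the $p$-Laplacian states that, for $v>0$ and $\phi \geq 0$ (use $|\phi|$, which has the same quotient),
$$
|\nabla \phi|^p \geq |\nabla v|^{p-2}\nabla v\cdot\nabla\!\left(\frac{\phi^p}{v^{p-1}}\right)
\quad \text{a.e.}
$$
Integrating this and using the weak formulation gives $\int |\nabla\phi|^p\,dx \geq a\int|\phi|^p\,dx$. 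Taking the infimum over $\phi$ yields $\mu \geq a > \mu$, which is a contradiction. Therefore $\lambda_f \leq \mu$.

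The only delicate point is admissibility of the Picone test function, since $v$ vanishes on $\partial\Omega$ and $\Omega_f^0$ may touch $\partial\Omega$ or be irregular. Restricting to $\phi \in C_c^\infty(\Omega_f^0)$ sidesteps this completely: $v$ is bounded away from zero on $\operatorname{supp}\phi$, so no boundary behaviour of $v$ is used. In particular the condition $\partial_\nu u>0$ is not needed, only $u<0$ in $\Omega$ for the single value $a$.
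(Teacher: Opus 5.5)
Your proof is correct and follows essentially the paper's argument: Picone's inequality with the test function $\phi^p/(-u)^{p-1}$ for $\phi\in C_c^\infty(\Omega_f^0)$, using $f=0$ a.e.\ on $\Omega_f^0$ and $u<0$ on $\operatorname{supp}\phi$, to get $\lambda\int\phi^p\,dx\le\int|\nabla\phi|^p\,dx$. The differences are cosmetic. You argue by contradiction for a single $a$ in the defining set, whereas the paper argues directly for every $\lambda>\lambda_1$ admitting a negative solution, which also gives Corollary~\ref{cor:eamp}, as your closing remark notes. You also handle signs via $|\phi|$ rather than restricting to nonnegative test functions.
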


\begin{openproblem}
In view of \cite{BDI}, Proposition~\ref{prop:upperbound}, and the definition \eqref{eq:lambdaf}, it remains unknown whether $\lambda_f < +\infty$ if the following assumptions are simultaneously satisfied: $p \neq 2$, $N\geq 2$, $\lambda_f^* = \lambda_2$, $\Omega_f^0 = \emptyset$, and  $(\mathcal{D}_{\widetilde\lambda_n})$ has a solution for a sequence $\widetilde\lambda_n \nearrow \lambda_f$. 
\end{openproblem}

As for the lower bounds on $\lambda_f$, we can refer only to the following inequality obtained in \cite{FHT2014} in the linear case $p=2$:
\begin{equation}\label{eq:lambdaflowerbound}
\lambda_f 
\geq 
\min \left\{
\Lambda, \lambda_1 + \frac{K \alpha}{\|f^\perp\|_\gamma}
\right\}
\quad \text{for any fixed}~  \Lambda \in (\lambda_1,\lambda_2),
\end{equation}
where $\alpha>0$ and $f^\perp \in L^\gamma(\Omega)$ are defined through the $L^2(\Omega)$-orthogonal decomposition $f = \alpha \varphi_1 + f^\perp$, and the constant $K=K(\Omega,\Lambda,\gamma)$ does not depend on $f$, yet $K$ is not explicitly quantified. 

In the linear case $p=2$, we provide the following result (cf.\ Theorem~\ref{thm:UAMP}), which might be used to obtain more explicit bounds on $\lambda_f$ in certain regimes. 
\begin{proposition}\label{prop:convexity}
Let $p=2$. 
For any $a\in(\lambda_1,\lambda_2)$, define
$$
\mathcal A_a
=
\left\{
f:~ f ~\text{satisfies \eqref{eq:F} and }~
a < \lambda_f
\right\}.
$$
Then $\mathcal A_a \cup \{0\}$ is a convex cone: if $f,g\in\mathcal A_a$ and $s,t\ge0$ with $s+t>0$, then $sf+tg\in\mathcal A_a$. 
Consequently, for any $f,g$ satisfying \eqref{eq:F}, 
$$
\min\{\lambda_f,\lambda_g\}
\leq
\lambda_{sf+tg},
$$
that is, the map $f \mapsto \lambda_f$ is quasiconcave.
In particular, if $f=\varphi_1$, then $\lambda_{f}=\lambda_2$, and hence
$$
\lambda_{g} \leq \lambda_{s\varphi_1 + t g}.
$$
\end{proposition}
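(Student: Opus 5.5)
The plan is to use linearity when $p=2$: for $\lambda\in(\lambda_1,\lambda_2)$ that is not a Dirichlet eigenvalue (there are none strictly between $\lambda_1$ and $\lambda_2$), problem \eqref{D} has a unique solution $u_{\lambda,f}=(-\Delta-\lambda)^{-1}f$. This solution depends linearly on $f$, so $u_{\lambda,sf+tg}=s\,u_{\lambda,f}+t\,u_{\lambda,g}$.

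First I will show that for $a\in(\lambda_1,\lambda_2)$ the condition $a<\lambda_f$ is equivalent to the following: for every $\lambda\in(\lambda_1,a]$, $u_{\lambda,f}<0$ in $\Omega$ and $\partial_\nu u_{\lambda,f}>0$ on $\partial\Omega$. Existence is automatic here, because $(\mathcal D_\lambda)$ is uniquely solvable for all $\lambda\in(\lambda_1,\lambda_2)$. In the definition \eqref{eq:lambdaf} the existence requirement is therefore vacuous for $a<\lambda_2$.

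\begin{itemize}
\item If $a<\lambda_f$, then by maximality every $\lambda\in(\lambda_1,a]$ lies in the validity range.
\item Conversely, suppose the sign property holds on $(\lambda_1,a]$. Then $\lambda_f\ge a$, and I need strict inequality. Here the definition of $\mathcal A_a$ uses strict inequality, so I must be careful. I will show the validity set $\{\lambda\in(\lambda_1,\lambda_2): u_\lambda<0,\ \partial_\nu u_\lambda>0\}$ is relatively open. The map $\lambda\mapsto u_\lambda$ is continuous into $C^1(\overline\Omega)$ (resolvent continuity plus the $W^{2,\gamma}\subset C^{1}$ regularity from \eqref{eq:F}). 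The condition ``$u<0$ in $\Omega$ and $\partial_\nu u>0$ on $\partial\Omega$'' defines an open set in $C^1_0(\overline\Omega)$, namely the interior of the negative cone. Hence if it holds at $\lambda=a$, it holds on a neighborhood of $a$, and $\lambda_f>a$.
\end{itemize}

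For convexity, take $f,g\in\mathcal A_a$ and $s,t\ge0$ with $s+t>0$. Then $h=sf+tg$ satisfies \eqref{eq:F}. For each $\lambda\in(\lambda_1,a]$, $u_{\lambda,h}=s u_{\lambda,f}+t u_{\lambda,g}$ is a nonnegative combination, not both coefficients zero, of functions in the open negative cone of $C^1_0(\overline\Omega)$. So it is negative in $\Omega$ with positive normal derivative. By the equivalence above, $a<\lambda_h$, i.e.\ $h\in\mathcal A_a$. The cone property is the same statement with $t=0$, which also follows from \eqref{eq:lambdacf}.

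For quasiconcavity, set $m=\min\{\lambda_f,\lambda_g\}$. For every $a\in(\lambda_1,\min\{m,\lambda_2\})$ we have $f,g\in\mathcal A_a$, hence $sf+tg\in\mathcal A_a$ and $\lambda_{sf+tg}>a$. Letting $a\uparrow\min\{m,\lambda_2\}$ gives $\lambda_{sf+tg}\ge\min\{m,\lambda_2\}=m$, since $\lambda_f,\lambda_g\le\lambda_2$ for $p=2$ by \cite{BDI}.

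For $f=\varphi_1$: the solution is explicitly $u=\varphi_1/(\lambda_1-\lambda)$, which is negative with $\partial_\nu u>0$ by \eqref{eq:bpl} for all $\lambda\in(\lambda_1,\lambda_2)$. Solutions exist there, so $\lambda_{\varphi_1}\ge\lambda_2$, and with the upper bound $\lambda_{\varphi_1}=\lambda_2$. Quasiconcavity then gives $\lambda_{s\varphi_1+tg}\ge\min\{\lambda_2,\lambda_g\}=\lambda_g$.

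The main obstacle is the openness step, i.e.\ the strict inequality $a<\lambda_h$. It requires the continuity of $\lambda\mapsto u_\lambda$ in $C^1(\overline\Omega)$ for sources in $L^\gamma$, $\gamma>N$. I would get this from the $W^{2,\gamma}$ estimates for $-\Delta-\lambda$ on the $C^{1,1}$ domain, uniform for $\lambda$ in compact subsets of $(\lambda_1,\lambda_2)$.
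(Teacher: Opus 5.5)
Your proof is correct and follows the paper's route: for $p=2$ you use the linearity $u_{sf+tg}=s\,u_f+t\,u_g$, note that the strict negative cone is closed under nonnegative combinations, and let $a\uparrow\min\{\lambda_f,\lambda_g\}$ using $\lambda_f,\lambda_g\le\lambda_2$. The openness/continuity step you flag as the main obstacle is valid but unnecessary: $f,g\in\mathcal{A}_a$ already gives $b:=\min\{\lambda_f,\lambda_g\}>a$, so running the linearity argument for every $\lambda\in(\lambda_1,b)$ yields $\lambda_{sf+tg}\ge b>a$ directly. (The paper runs it only on $(\lambda_1,a)$ and does not spell out this strictness point.)
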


\begin{openproblem}
Find an (explicitly) quantified lower bound on $\lambda_f$ for $p \neq 2$. 
\end{openproblem}

Finally, we note that the map $f \mapsto \lambda_f$ is not monotone in any reasonable sense, at least for $p=2$.  
For instance, let us take some $\varepsilon \in (0,1)$, some second eigenfunction $\varphi_2$ normalized as $\|\varphi_2\|_\infty=1$, and put $h=\varphi_1(1+ \varepsilon \varphi_2)$. 
Then we have 
\begin{equation}\label{eq:phifphi}
0<(1-\varepsilon) \varphi_1 \leq h \leq (1+\varepsilon) \varphi_1 \quad \text{in}~ \Omega. 
\end{equation}
Noting that $\int_\Omega h \varphi_2 \,dx = \varepsilon \intO \varphi_1 \varphi_2^2 \,dx > 0$, we deduce from \cite[Proposition~1.2]{BDI} above that $\lambda_h < \lambda_h^* < \lambda_2$. 
However, we also have $\lambda_{\varphi_1} = \lambda_2$ in the linear case $p=2$, see \cite[Lemma~2.1 (v)]{BDI}. 
Thus, we deduce from these facts and \eqref{eq:lambdacf}, \eqref{eq:phifphi} that $f \leq g$ implies either $\lambda_f \leq \lambda_g$ or $\lambda_f \geq \lambda_g$, depending on the choice of $f,g$.

\subsection{Extended AMP}\label{sec:EAMP}

The proof of Proposition~\ref{prop:upperbound} gives more than just an upper bound for $\lambda_f$. 
To make this precise, we introduce the following \textit{extended} antimaximum principle: 
\begin{equation}
	\label{EAMP}
	\begin{split}
		&\text{There exists $\widetilde{\lambda}_f > \lambda_1$ such that if $\lambda > \widetilde{\lambda}_f$, then} \\
		&\text{no solution of \eqref{D} is negative in $\Omega$.}
		\end{split}
	\end{equation}
We always assume that $\widetilde{\lambda}_f$ is a minimal value such that the extended $\mathcal{AMP}$ \eqref{EAMP} holds. 
Equivalently, $\widetilde{\lambda}_f$ can be defined as 
\begin{equation}\label{eq:lambdaf-eamp}
\widetilde{\lambda}_f 
=
\sup
\{\lambda>\lambda_1:~ \eqref{D} ~\text{has a negative solution}\}.
\end{equation}
By comparing \eqref{eq:lambdaf} and \eqref{eq:lambdaf-eamp}, we get $\lambda_f \leq \widetilde{\lambda}_f$. (Here, we used the existence assumption in \eqref{eq:lambdaf}.)
The arguments of Proposition~\ref{prop:upperbound} actually provide a bound on $\widetilde{\lambda}_f$.
\begin{corollary}\label{cor:eamp}
Let $\Omega_f^0 \neq \emptyset$. 
Then $\widetilde{\lambda}_f \leq \lambda_1(\Omega_f^0) < +\infty$.  
\end{corollary}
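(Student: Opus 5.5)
The plan is to show that if $u$ is a negative solution of \eqref{D} for some $\lambda>\lambda_1$, then necessarily $\lambda\le\lambda_1(\Omega_f^0)$. This immediately gives $\widetilde{\lambda}_f \le \lambda_1(\Omega_f^0)$ by \eqref{eq:lambdaf-eamp}. Finiteness of $\lambda_1(\Omega_f^0)$ is clear: $\Omega_f^0$ is a nonempty open set, so it contains a ball $B$, and domain monotonicity of \eqref{eq:lambda1} gives $\lambda_1(\Omega_f^0)\le\lambda_1(B)<+\infty$. If $\Omega_f^0$ is disconnected, I will interpret $\lambda_1(\Omega_f^0)$ through the Rayleigh quotient \eqref{eq:lambda1} over $W_0^{1,p}(\Omega_f^0)$. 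It then suffices to work on a connected component $U$ with $\lambda_1(U)$ arbitrarily close to $\lambda_1(\Omega_f^0)$; in fact the infimum over components is attained. So I will fix a connected open $U\subset\Omega_f^0$ on which $f=0$ a.e.

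Let $u<0$ in $\Omega$ solve \eqref{D}, and set $v=-u>0$. On $U$, since $f=0$, $v$ is a positive weak solution of $-\Delta_p v=\lambda v^{p-1}$ in $U$, and $v>0$ on $\overline U\cap\Omega$. The key step is a Picone-type argument. Let $\psi\in W_0^{1,p}(U)$ be the positive first eigenfunction of $U$, extended by zero. Picone's identity for the $p$-Laplacian gives, for $v>0$ and $\psi\ge0$,
\begin{equation}
\int_U |\nabla\psi|^p\,dx \ \ge\ \int_U |\nabla v|^{p-2}\nabla v\cdot\nabla\Big(\frac{\psi^p}{v^{p-1}}\Big)\,dx = \lambda\int_U \psi^p\,dx ,
\end{equation}
where the equality uses $\psi^p/v^{p-1}$ as a test function in the equation for $v$ on $U$. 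Dividing by $\int_U\psi^p\,dx$ yields $\lambda_1(U)\ge\lambda$. To justify the test function, I will first use $\psi^p/(v+\varepsilon)^{p-1}$, which lies in $W_0^{1,p}(U)$ because $v\in C^{1,\beta}(\overline\Omega)$ and $\psi$ is bounded. The equation for $v$ on $U$ contributes $\lambda\int_U v^{p-1}\psi^p/(v+\varepsilon)^{p-1}\,dx$, and I then let $\varepsilon\to0$ by monotone convergence. Picone's inequality applied with $v+\varepsilon$ in place of $v$ gives the left-hand inequality for every $\varepsilon$.

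The main obstacle is this justification near $\partial U\cap\partial\Omega$, where $v$ vanishes. The $\varepsilon$-regularization above handles it without needing $\partial_\nu u>0$. An alternative route avoids Picone altogether: use the $C^{1,\beta}$ regularity and the strong maximum principle to observe that $v$ is a positive supersolution on $U$, then invoke the standard characterization that a positive supersolution of $-\Delta_p w=\lambda w^{p-1}$ exists only if $\lambda\le\lambda_1$. I will use the Picone route since it is self-contained. Finally, since $\lambda$ was arbitrary among values admitting negative solutions, the supremum in \eqref{eq:lambdaf-eamp} is at most $\lambda_1(\Omega_f^0)$. This is the same argument that yields Proposition~\ref{prop:upperbound}, combined with $\lambda_f\le\widetilde\lambda_f$.
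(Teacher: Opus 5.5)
Your proposal is correct and follows essentially the same route as the paper: Picone's inequality applied to $-u>0$ with a nonnegative test function supported in $\Omega_f^0$, where the source term drops out and one obtains $\lambda\int\varphi^p\,dx\le\int|\nabla\varphi|^p\,dx$. The only difference is technical: the paper tests with arbitrary nonnegative $\varphi\in C_0^\infty(\Omega_f^0)$, so that $-u$ is bounded away from zero on $\operatorname{supp}\varphi$, and concludes by density, whereas you test directly with the bounded first eigenfunction and remove the possible vanishing of $v$ via $\psi^p/(v+\varepsilon)^{p-1}$ and monotone convergence; this works equally well, and the reduction to a connected component is unnecessary since $v>0$ throughout $\Omega$.
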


Notice that the proof of Corollary~\ref{cor:eamp} cannot be directly applied if the \textit{negativity} in \eqref{EAMP} (or \eqref{eq:lambdaf-eamp}) is replaced by the \textit{nonpositivity}. 
Such a modified version of the extended $\mathcal{AMP}$ is of independent interest. 

Despite Corollary~\ref{cor:eamp}, it might occur that $\widetilde{\lambda}_f = +\infty$. Indeed, let $f = \varphi_1^{p-1}$. 
Then, for any $\lambda>\lambda_1$, a negative solution of \eqref{D} is given by
$$
u = -\frac{\varphi_1}{(\lambda-\lambda_1)^{1/(p-1)}} \quad \text{in}~\Omega,
$$
implying $\widetilde{\lambda}_f = +\infty$. 
In the linear case $p=2$, we describe a larger sufficient class, cf.\ \cite[Theorem~4.1]{FleckGossThel}.

\begin{proposition}\label{prop:eamp-linear}
Let $p=2$. 
Let $\{\varphi_k\}$ be a basis of eigenfunctions. 
Let $f=\sum_{i=1}^m c_i\varphi_i$ for some $m\in\mathbb{N}$, where $c_1,\ldots,c_m\in\mathbb{R}$ are chosen so that $f\geq \kappa\varphi_1$ in $\Omega$ for some $\kappa>0$. 
Then $\widetilde\lambda_f=+\infty$. 
\end{proposition}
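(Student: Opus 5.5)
Since $f$ is a finite combination of eigenfunctions, for $p=2$ and any $\lambda>\lambda_1$ different from $\lambda_2,\dots,\lambda_m$ we write down an explicit solution of \eqref{D} and show that it is negative for all sufficiently large $\lambda$. By \eqref{eq:lambdaf-eamp}, this forces $\widetilde\lambda_f=+\infty$. Take $\varphi_k$ with $-\Delta\varphi_k=\lambda_k\varphi_k$ and put
$$
u_\lambda=\sum_{i=1}^m \frac{c_i}{\lambda_i-\lambda}\,\varphi_i .
$$
Then $-\Delta u_\lambda-\lambda u_\lambda=\sum_i c_i\varphi_i=f$ and $u_\lambda=0$ on $\partial\Omega$, so $u_\lambda$ solves \eqref{D}. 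It suffices to show $u_\lambda<0$ in $\Omega$ for all $\lambda>\Lambda_0$ and some $\Lambda_0$. The definition of $\widetilde\lambda_f$ only needs a negative solution, and $\lambda$ ranges over $(\Lambda_0,\infty)$ minus finitely many eigenvalues, so the supremum is infinite.

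Multiplying by $\lambda>\lambda_m$, we compute
$$
\lambda u_\lambda=-\sum_{i=1}^m c_i\varphi_i\cdot\frac{\lambda}{\lambda-\lambda_i}
=-f-\sum_{i=1}^m c_i\frac{\lambda_i}{\lambda-\lambda_i}\varphi_i .
$$
Since $f\ge\kappa\varphi_1$, we obtain
$$
\lambda u_\lambda\le -\kappa\varphi_1+\frac{C}{\lambda-\lambda_m}\sum_{i=1}^m|\varphi_i|,
\qquad C=\max_i|c_i|\lambda_i .
$$
The key step is to dominate each $|\varphi_i|$ by a multiple of $\varphi_1$. Since $\Omega$ is $C^{1,1}$, each $\varphi_i$ lies in $C^{1}(\overline\Omega)$ and vanishes on $\partial\Omega$. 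By \eqref{eq:bpl} and Hopf's lemma, $\varphi_1\ge c\,\mathrm{dist}(x,\partial\Omega)$. Hence $|\varphi_i|\le M_i\,\mathrm{dist}(x,\partial\Omega)\le (M_i/c)\varphi_1$, that is, $|\varphi_i|\le K\varphi_1$ in $\Omega$ for a constant $K$.

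Combining these bounds gives
$$
\lambda u_\lambda\le\Bigl(-\kappa+\frac{CmK}{\lambda-\lambda_m}\Bigr)\varphi_1 ,
$$
which is strictly negative in $\Omega$ once $\lambda>\lambda_m+CmK/\kappa$. The main point needing care is the comparison $|\varphi_i|\le K\varphi_1$. It rests on boundary regularity, namely eigenfunctions being $C^{1}$ up to the boundary, which holds for $C^{1,1}$ domains, together with the Hopf property \eqref{eq:bpl}. The remaining steps are the algebra above.
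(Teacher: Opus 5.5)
Your proof is correct and follows essentially the same route as the paper: the explicit solution $u_\lambda=\sum_{i=1}^m c_i\varphi_i/(\lambda_i-\lambda)$, the identity $\lambda u_\lambda=-f-\sum_{i=1}^m c_i\lambda_i\varphi_i/(\lambda-\lambda_i)$, and the comparison $|\varphi_i|\le K\varphi_1$ obtained from $C^{1}(\overline{\Omega})$-regularity and the Hopf property of $\varphi_1$. The only difference is cosmetic: you bound the remainder by a multiple of $1/(\lambda-\lambda_m)$ and get an explicit threshold, whereas the paper restricts to $\lambda\ge 2\lambda_m$ and bounds it by a multiple of $\lambda_m/\lambda$.
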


We refer to \cite{FleckGossThel} for some other extensions of \ref{AMP} beyond $\lambda_1$ in the linear case $p=2$. 

\begin{openproblem}
Characterize an optimal class of sources guaranteeing $\widetilde\lambda_f=+\infty$ in the linear case $p=2$. 
Describe a nontrivial class of sources guaranteeing $\widetilde\lambda_f=+\infty$ in the nonlinear case $p \neq 2$. 
\end{openproblem}

\subsection{Anticomparison principle}\label{sec:anticomparison}

Consider now two problems of the type \eqref{D}:
\begin{gather}
\label{eq:D1}
-\Delta_p u = \lambda |u|^{p-2}u + f(x) ~\text{in}~ \Omega, \quad u=0 ~\text{on}~\partial\Omega,\\ 
\label{eq:D2}
-\Delta_p v = \lambda |v|^{p-2}v + g(x) ~\text{in}~ \Omega, \quad v=0 ~\text{on}~\partial\Omega,
\end{gather}
where $f,g$ satisfy \eqref{eq:F}.
Let $u,v \in W_0^{1,p}(\Omega)$ be any solutions of \eqref{eq:D1}, \eqref{eq:D2}, respectively.  
In the linear case $p=2$, the maximum principle \ref{MP} is equivalent to the \textit{comparison} principle. 
That is to say, 
\begin{equation}\label{eq:comparison}
\begin{split}
&\text{If $\lambda < \lambda_1$ and $f \leq g$, $f \not\equiv g$ in $\Omega$, then}\\
& 0<u<v ~\text{in}~\Omega
~\text{and}~ \partial_\nu v< \partial_\nu u < 0 ~ \text{on}~ \partial \Omega.
\end{split}
\end{equation}
In the nonlinear case $p \neq 2$, the situation is considerably more complicated and, in general, the comparison principle might be violated. 
Nonetheless, it was proved in \cite{CuestaTakac-strongcomparison1} that \eqref{eq:comparison} remains valid under the additional assumptions $\lambda \geq 0$, $f,g \in L^\infty(\Omega)$, and $\Omega$ is of class $C^{2,\alpha}$, $\alpha \in (0,1)$. 

We would like to investigate what happens with the comparison principle beyond $\lambda_1$, by analogy with the relation between \ref{MP} and \ref{AMP}. 
In this way, we state the following \textit{anticomparison} principle. 

\begin{theorem}\label{thm:anticomparison}
Let $f,g$ satisfy \eqref{eq:F}. 
Let $f \leq g$ and $f \not\equiv g$ in $\Omega$.
Then there exists $\lambda^* \in (\lambda_1, \min\{\lambda_f, \lambda_g\})$ such that, for every $\lambda \in (\lambda_1,\lambda^*)$, any solutions $u,v \in W_0^{1,p}(\Omega)$ of \eqref{eq:D1}, \eqref{eq:D2} satisfy
\begin{equation}\label{eq:thm:ACP}
 v<u<0 ~\text{in}~\Omega
\quad \text{and} \quad \partial_\nu v > \partial_\nu u > 0 ~ \text{on}~ \partial \Omega.
\end{equation}
\end{theorem}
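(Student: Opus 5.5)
We argue by contradiction, using the compactness approach of \cite{FGTT}. Suppose no such $\lambda^*$ exists. Then there are $\lambda_n \searrow \lambda_1$ and solutions $u_n, v_n$ of \eqref{eq:D1}, \eqref{eq:D2} with $\lambda=\lambda_n$ for which \eqref{eq:thm:ACP} fails. Since $\lambda_n<\min\{\lambda_f,\lambda_g\}$ for large $n$, \ref{AMP} already gives $u_n,v_n<0$ in $\Omega$ and $\partial_\nu u_n,\partial_\nu v_n>0$ on $\partial\Omega$. So the failure must be in the ordering: either $v_n\ge u_n$ somewhere in $\Omega$, or $\partial_\nu v_n\le\partial_\nu u_n$ somewhere on $\partial\Omega$.

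\textbf{Blow-up and normalization.} Testing with $\varphi_1$, and using that $\lambda_n\to\lambda_1$ while $\int f\varphi_1>0$, the standard argument shows $\|u_n\|_\infty,\|v_n\|_\infty\to\infty$. Set $t_n=(\lambda_n-\lambda_1)^{-1/(p-1)}$. I expect the precise asymptotics
$$
\frac{u_n}{t_n}\to -c_f\varphi_1,\qquad \frac{v_n}{t_n}\to -c_g\varphi_1 \quad\text{in } C^1(\overline\Omega),
$$
with $c_f^{p-1}\int_\Omega\varphi_1^p\,dx=\int_\Omega f\varphi_1\,dx$ and the analogous identity for $g$.

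To justify these limits, first normalize $w_n=u_n/\|u_n\|_\infty$. This gives $-\Delta_p w_n=\lambda_n|w_n|^{p-2}w_n+f/\|u_n\|_\infty^{p-1}$, and the $C^{1,\beta}$ bounds from \cite{BobkovTanakaAM} imply $w_n\to-\varphi_1/\|\varphi_1\|_\infty$ in $C^1$. Next, testing the equation with $\varphi_1$ and using the Picone/Díaz–Saa type identity, or simply the limit, gives $(\lambda_n-\lambda_1)\|u_n\|_\infty^{p-1}\to \int f\varphi_1/\int\varphi_1^p\cdot\|\varphi_1\|_\infty^{p-1}$. This fixes the scale $\|u_n\|_\infty\sim c_f t_n\|\varphi_1\|_\infty$.

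\textbf{Strict order of the constants.} Since $f\le g$, $f\not\equiv g$ and $\varphi_1>0$, we get $\int f\varphi_1<\int g\varphi_1$, hence $c_f<c_g$. It follows that $(u_n-v_n)/t_n\to(c_g-c_f)\varphi_1$ in $C^1(\overline\Omega)$. By \eqref{eq:bpl}, $(c_g-c_f)\varphi_1$ is positive in $\Omega$ with strictly negative normal derivative on $\partial\Omega$. The $C^1$ convergence then forces $u_n-v_n>0$ in $\Omega$ and $\partial_\nu(u_n-v_n)<0$ on $\partial\Omega$ for large $n$. Here one uses the usual argument: the interior negativity of the normal derivative, together with uniform $C^1$ closeness, controls a boundary strip, while compact subsets are handled by uniform convergence. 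This contradicts the choice of $(u_n,v_n)$.

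\textbf{Main obstacle.} The delicate step is the first-order asymptotics. The convergence $w_n\to$ multiple of $\varphi_1$ only gives the shape; we need the scaling constant with a rate, so that the difference $u_n-v_n$, rescaled by the \emph{same} $t_n$ for both, has a nonzero limit. For $p\ne2$ there is no linearity, so I would derive the constant by testing the equation with $\varphi_1$: writing $u_n=-s_n(\varphi_1+r_n)$ with $r_n\to0$ in $C^1$, one gets $s_n^{p-1}\big(\int|\nabla(\varphi_1+r_n)|^{p-2}\nabla(\varphi_1+r_n)\cdot\nabla\varphi_1-\lambda_n\int(\varphi_1+r_n)^{p-1}\varphi_1\big)=\int f\varphi_1$. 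The bracket equals $-(\lambda_n-\lambda_1)\int\varphi_1^p$ plus a term that is $o(1)$ in $r_n$. That error must be shown to be $o(\lambda_n-\lambda_1)$, which is where the real work lies; it is the analogue of the expansion in \cite{FGTT}. If this fails for $p\ne2$, the fallback is to test with $\varphi_1^p/|u_n|^{p-1}$-type functions (Picone) to obtain the two-sided bound $s_n^{p-1}(\lambda_n-\lambda_1)\to\int f\varphi_1/\int\varphi_1^p$ directly.
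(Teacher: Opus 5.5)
Your architecture is the paper's: argue by contradiction along $\lambda_n\searrow\lambda_1$, get the signs from \ref{AMP}, establish the sharp profile $(\lambda_n-\lambda_1)^{1/(p-1)}u_n\to-\bigl(\int_\Omega f\varphi_1\,dx\bigr)^{1/(p-1)}\varphi_1$ in $C^1(\overline\Omega)$ (with $\|\varphi_1\|_p=1$) and its analogue for $v_n$, then subtract and use $\int_\Omega f\varphi_1\,dx<\int_\Omega g\varphi_1\,dx$ together with \eqref{eq:bpl}. In the paper this profile is Theorem~\ref{thm:AMP}, which is proved beforehand and simply invoked. In your proposal it is the one nontrivial step, and for $p\neq2$ you do not establish it.

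Your primary route, testing with $\varphi_1$, does not close as stated. The first-order term in $r_n$ does vanish, since it is the eigenvalue equation for $\varphi_1$ tested with $r_n$. The remainder, however, is only superlinear in $r_n$. You would therefore need a quantitative rate for $r_n\to0$ in terms of $\lambda_n-\lambda_1$. Compactness does not give such a rate, and obtaining it would require an invertibility theory for the degenerate linearized $p$-Laplacian.

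Your fallback uses the right tool but expects it to deliver both inequalities. The Picone inequality with test function $\varphi_1^p/(-u_n)^{p-1}$ gives only the upper bound $\limsup_n(\lambda_n-\lambda_1)\|u_n\|_p^{p-1}\le\int_\Omega f\varphi_1\,dx$. This test function is admissible because $u_n<0$ and $\partial_\nu u_n>0$, and $\varphi_1/(-u_n/\|u_n\|_p)\to1$ uniformly by the $C^1$ convergence.

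The missing lower bound comes from a different and easier test. Take $u_n$ itself in the weak formulation and use \eqref{eq:lambda1}, i.e., $\|\nabla u_n\|_p^p\ge\lambda_1\|u_n\|_p^p$. This yields $(\lambda_n-\lambda_1)\|u_n\|_p^{p-1}\ge\int_\Omega f\,(-u_n/\|u_n\|_p)\,dx\to\int_\Omega f\varphi_1\,dx$. Squeezing the two bounds is exactly the proof of Theorem~\ref{thm:AMP}, and after that your subtraction step goes through verbatim.

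A smaller point: for $p\neq2$, the blow-up $\|u_n\|\to\infty$ does not follow from testing with $\varphi_1$. It needs the nonexistence of solutions of $(\mathcal{D}_{\lambda_1})$ for $f\ge0$, $f\not\equiv0$, as used in Proposition~\ref{prop:AMP-convergence}.
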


\begin{proposition}\label{prop:anticomparison-linear}
Let the assumptions of Theorem~\ref{thm:anticomparison} be satisfied. 
If $p=2$, then the anticomparison principle \eqref{eq:thm:ACP} holds for any $\lambda \in (\lambda_1, \min\{\lambda_{f},\lambda_{g-f}\})$, and $\min\{\lambda_{f},\lambda_{g-f}\}$ is the endpoint of its validity.  
\end{proposition}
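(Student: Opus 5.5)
In the linear case $p=2$ the problems \eqref{eq:D1} and \eqref{eq:D2} are linear, so I would reduce the anticomparison principle to the antimaximum principle for two sources. Put $w=v-u$ and $h=g-f$. Then $h\ge0$ and $h\not\equiv0$, and $h\in L^\gamma(\Omega)$, so $h$ satisfies \eqref{eq:F}. Subtracting the equations gives $-\Delta w=\lambda w+h$ with $w=0$ on $\partial\Omega$. The claim \eqref{eq:thm:ACP} is exactly the conjunction of two statements. The first is $u<0$ in $\Omega$ with $\partial_\nu u>0$ on $\partial\Omega$, which is \ref{AMP} for the source $f$. The second is $w<0$ in $\Omega$ with $\partial_\nu w>0$ on $\partial\Omega$, which is \ref{AMP} for the source $h$ applied to $w$.

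\emph{Validity.} Let $\lambda\in(\lambda_1,\min\{\lambda_f,\lambda_h\})$. Since $\lambda_f\le\lambda_2$ and $\lambda_h\le\lambda_2$ for $p=2$, we have $\lambda<\lambda_2$, so $\lambda$ is not an eigenvalue.
\begin{itemize}
\item The solutions $u$, $v$ exist and are unique by the Fredholm alternative, and $w=v-u$ is the unique solution of the problem with source $h$.
\item By the definition \eqref{eq:lambdaf} of $\lambda_f$, the solution $u$ is negative with $\partial_\nu u>0$.
\item By the definition of $\lambda_h$, the function $w$ is negative with $\partial_\nu w>0$. This gives $v<u$ and $\partial_\nu v>\partial_\nu u$.
\end{itemize}
Together these yield \eqref{eq:thm:ACP}.

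\emph{Optimality.} Set $\lambda_\sharp=\min\{\lambda_f,\lambda_h\}$. I would show that for every $\varepsilon>0$ some $\lambda\in[\lambda_\sharp,\lambda_\sharp+\varepsilon)$ violates \eqref{eq:thm:ACP}. This is the main obstacle, because it needs the behaviour of \ref{AMP} just beyond $\lambda_f$, and \eqref{eq:lambdaf} is only a supremum. Suppose $\lambda_\sharp=\lambda_f$; the case $\lambda_\sharp=\lambda_h$ is identical with $w$ in place of $u$.
\begin{itemize}
\item If $\lambda_f<\lambda_2$, maximality of $\lambda_f$ in \eqref{eq:lambdaf} gives, in every right neighbourhood of $\lambda_f$, values $\lambda<\lambda_2$ at which the unique solution $u$ fails to satisfy $u<0$ or $\partial_\nu u>0$. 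Existence is automatic there by Fredholm, so failure of the sign is the only way maximality can be attained. At such $\lambda$, \eqref{eq:thm:ACP} fails.
\item If $\lambda_f=\lambda_2$, then \eqref{eq:thm:ACP} also fails at $\lambda=\lambda_2$ itself. Either there is no solution, since the set of admissible $\lambda$ in \eqref{eq:lambdaf} must contain values above $\lambda_2$ to extend past it, or one argues by the definition directly.
\end{itemize}
To keep this clean, I would phrase "endpoint of validity" as: the largest $\Lambda$ such that \eqref{eq:thm:ACP} holds for all $\lambda\in(\lambda_1,\Lambda)$ with existence of solutions. By the two-way reduction, this is precisely $\min\{\lambda_f,\lambda_{g-f}\}$, since \eqref{eq:thm:ACP} on an interval is equivalent to \ref{AMP} holding on that interval for both $f$ and $g-f$.

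For the converse inclusion, I would argue as follows. If \eqref{eq:thm:ACP} holds for all $\lambda\in(\lambda_1,\Lambda)$, then in particular $u<0$ with $\partial_\nu u>0$ there, so $\Lambda\le\lambda_f$. Likewise, every solution $w$ of the $h$-problem arises as $v-u$ for some solution pair: given $w$, take $v=u+w$. Hence $w<0$ with $\partial_\nu w>0$, giving $\Lambda\le\lambda_{g-f}$.
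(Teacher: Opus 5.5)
Your validity part is the same as the paper's. With $w=v-u$ solving the problem with source $g-f$, you apply \ref{AMP} separately to $u$ and to $w$.

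For optimality you take a different route. The paper shows that \eqref{eq:thm:ACP} fails at $\lambda=\min\{\lambda_f,\lambda_{g-f}\}$ itself. When this minimum is $\lambda_f<\lambda_2$, it assumes the solution at $\lambda_f$ satisfies \ref{AMP}. It then uses the $C^1$-continuity of Proposition~\ref{prop:AMP-convergence} to extend \ref{AMP} to a neighbourhood of $\lambda_f$, contradicting maximality. When $\lambda_f=\lambda_2$, it uses \cite[Proposition~1.2]{BDI} to get $\int_\Omega f\varphi_2\,dx=0$, and then the Fredholm family $u+c\varphi_2$, which is sign-changing for suitable $c$.

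Your closing converse argument instead stays entirely at the level of the definition \eqref{eq:lambdaf}. If \eqref{eq:thm:ACP} holds, with existence, on $(\lambda_1,\Lambda)$, then every $a<\Lambda$ is admissible for $f$. Through the correspondence $w\mapsto v=u+w$, every such $a$ is also admissible for $g-f$. Hence $\Lambda\le\min\{\lambda_f,\lambda_{g-f}\}$. This is correct and shorter, and it needs neither Proposition~\ref{prop:AMP-convergence} nor the information on $\varphi_2$. What it gives up is the paper's sharper pointwise fact that the principle breaks down exactly at the endpoint; you only show that no longer interval of validity exists.

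Your separate bullet for $\lambda_f=\lambda_2$ is not a valid argument as written. If $(\mathcal{D}_{\lambda_2})$ had no solution, \eqref{eq:thm:ACP} would hold vacuously there rather than fail. The bullet is superfluous, though, because your converse argument already covers that case.
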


In the linear case $p=2$, the classical Fredholm alternative guarantees the existence of a unique solution of \eqref{D} when $\lambda$ is not an eigenvalue. 
Assuming that $f$ satisfies \eqref{eq:F} and $f \in L^\infty(\Omega)$, the uniqueness remains valid for any  $\lambda<\lambda_1$ and $p > 1$, see \cite{CuestaTakac-strongcomparison1}. 
In contrast, when $p \neq 2$ and $f$ is sign-changing, \eqref{D} might possess several distinct solutions when $\lambda<\lambda_1$, see \cite[Remark~1]{CuestaTakac-strongcomparison1}. 
We refer to the surveys \cite{drabek,takac-lec1} and extensive bibliographies therein for a number of involved results on the nonlinear Fredholm alternative. 

Although Theorem~\ref{thm:anticomparison} applies to \textit{any} solutions of \eqref{eq:D1}, \eqref{eq:D2}, the following question appears to be natural. 

\begin{openproblem}
Let $p \neq 2$. Prove (or disprove) the uniqueness of a solution of \eqref{D} as $\lambda \searrow \lambda_1$, under the assumption \eqref{eq:F}. 
\end{openproblem}

\subsection{AMP in Morrey spaces}\label{sec:Morrey}

In this section, we deal only with the linear case $p=2$, for simplicity. 
What matters most for the validity of \ref{AMP} is the uniform $C^{1,\beta}(\overline{\Omega})$-regularity of solutions of \eqref{D}. 
Such regularity takes place if $f \in L^\gamma(\Omega)$, $\gamma > N$, while 
\ref{AMP} does not generally hold for $f \in L^N(\Omega)$, see an explicit example in \cite{sweers}. 
At the same time, the scale of the Lebesgue spaces is not the only sufficient class for obtaining the required regularity. 
Another sufficient scale is described in terms of the Morrey spaces. 

Let $q \in [1,+\infty)$ and $\mu \in [0,N]$. 
We define the Morrey space $L^{q,\mu}(\Omega)$ as the set of functions $f \in L^1_{\text{loc}}(\Omega)$ such that
$$
 \|f\|_{q,\mu}^q
 :=\sup_{x\in\Omega,\,0<r\le \operatorname{diam}\Omega}
 r^{-\mu}\int_{\Omega\cap B_r(x)}|f|^q\,dy < +\infty. 
$$
We refer to \cite{PKJF,SFH} for a systematic study of these spaces. 
In particular, let us mention that $L^{q,\mu}(\Omega)$ is a Banach space with the norm $\|\cdot\|_{q,\mu}$, and
\begin{equation}\label{eq:propertiesMorrey}
L^{q,\mu}(\Omega) \subset L^{q}(\Omega),
\quad
L^{q,0}(\Omega)=L^q(\Omega),
\quad
L^{q,N}=L^\infty(\Omega).
\end{equation}

We now state \ref{AMP} for strong solutions of \eqref{D} when the source function belongs to the Morrey space. 
Hereinafter, for a given $q>1$, by the strong solution of \eqref{D} we understand a function $u \in W^{2,q}(\Omega)\cap W^{1,q}_0(\Omega)$ which satisfies the equation in \eqref{D} a.e.\ in $\Omega$. 
\begin{theorem}\label{thm:AMP-morrey}
Let $p=2$. 
Assume that $\int_\Omega f \varphi_1 \,dx > 0$
and $f\in L^{q,\mu}(\Omega)$, 
where
\begin{equation}\label{eq:morrey:assumptions}
1<q \leq N \quad \text{and} \quad N-q< \mu < N.
\end{equation}
Then \ref{AMP} holds for strong solutions  
$u \in W^{2,q}(\Omega)\cap W^{1,q}_0(\Omega)$ of \eqref{D}. 
Such solutions exist and are unique for any $\lambda \in (\lambda_1,\lambda_2)$.  
\end{theorem}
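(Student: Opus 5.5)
We follow the compactness scheme of \cite{FGTT}, with the Lebesgue regularity theory replaced by Morrey-space regularity. The key input is an a priori estimate: for $1<q\le N$, $N-q<\mu<N$, any strong solution $w\in W^{2,q}(\Omega)\cap W^{1,q}_0(\Omega)$ of $-\Delta w = h$ with $h\in L^{q,\mu}(\Omega)$ satisfies $D^2 w\in L^{q,\mu}(\Omega)$. It obeys
$$
\|w\|_{W^{2,q}}+\|D^2 w\|_{q,\mu}\le C\big(\|h\|_{q,\mu}+\|w\|_{q}\big).
$$
This is a Calder\'on--Zygmund estimate in Morrey spaces, valid up to the boundary for $C^{1,1}$ domains (see \cite{PKJF,SFH}). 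We then apply the Morrey-type Sobolev embedding. Since $\nabla w\in W^{1,q}$ with $D^2 w\in L^{q,\mu}$ and $q+\mu>N$, Morrey's Dirichlet growth lemma yields $\nabla w\in C^{0,\beta}(\overline\Omega)$ with $\beta = 1-(N-\mu)/q\in(0,1)$. Hence $\|w\|_{C^{1,\beta}(\overline\Omega)}\le C(\|h\|_{q,\mu}+\|w\|_q)$, and $C^{1,\beta}(\overline\Omega)\hookrightarrow C^1(\overline\Omega)$ compactly.

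\textbf{Existence and uniqueness.} Note that $L^{q,\mu}\subset L^{q}\subset L^2$ fails when $q<2$. However, $L^{q,\mu}(\Omega)\subset L^{s}(\Omega)$ for some $s>q$ (indeed $L^{q,\mu}\subset L^{r}$ for $r<qN/(N-\mu)$, and this bound exceeds $N$). In particular $f\in L^{s}$ with $s>N\ge 1$, so $f\in H^{-1}$. For $\lambda\in(\lambda_1,\lambda_2)$ the Fredholm alternative gives a unique weak solution $u\in H^1_0$. Standard $W^{2,s}$ theory together with the Morrey estimate above shows that $u$ is a strong solution with $D^2u\in L^{q,\mu}$. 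Conversely, any strong solution in $W^{2,q}\cap W^{1,q}_0$ is a distributional solution. It is then an $H^1_0$ solution by bootstrapping (using $u\in L^{s'}$ for large $s'$ via Sobolev and $W^{2,\cdot}$ iteration), so uniqueness follows from the weak uniqueness.

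\textbf{AMP.} Argue by contradiction: take $\lambda_n\searrow\lambda_1$ and solutions $u_n$ that are not negative with $\partial_\nu u_n>0$. First, $\|u_n\|_q\to\infty$. Otherwise the estimate above bounds $u_n$ in $C^{1,\beta}$, and a subsequence converges in $C^1$ to a solution of $(\mathcal D_{\lambda_1})$. Testing with $\varphi_1$ would then give $\int f\varphi_1=0$, a contradiction. Set $w_n=u_n/\|u_n\|_q$. These satisfy $-\Delta w_n=\lambda_n w_n+f/\|u_n\|_q$, so they are bounded in $C^{1,\beta}$ and, along a subsequence, converge in $C^1(\overline\Omega)$ to a nontrivial solution of $-\Delta w=\lambda_1 w$. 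Hence $w=c\varphi_1$ with $c\neq0$.

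To fix the sign, test the equation of $u_n$ with $\varphi_1$:
$$
(\lambda_1-\lambda_n)\int u_n\varphi_1=\int f\varphi_1>0,
$$
so $\int u_n\varphi_1<0$, which forces $c<0$. By \eqref{eq:bpl} (the Hopf property of $\varphi_1$), $C^1$ convergence to $c\varphi_1$ gives $w_n<0$ in $\Omega$ and $\partial_\nu w_n>0$ on $\partial\Omega$ for large $n$. This contradicts the choice of $u_n$.

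\textbf{Main obstacle.} The delicate step is the global boundary Morrey regularity estimate for $D^2 w$ in a $C^{1,1}$ domain, together with the resulting uniform $C^{1,\beta}$ bound. Its constant must depend only on $\|h\|_{q,\mu}$ and a lower-order norm, uniformly in $\lambda$ near $\lambda_1$. The $\lambda_n w_n$ term is absorbed into $h$ by a bootstrap: $w_n$ bounded in $W^{2,q}$ implies $w_n$ bounded in $L^\infty$ and hence in $L^{q,\mu}$. I would cite the Morrey-space Calder\'on--Zygmund theory from \cite{PKJF,SFH} for this rather than reprove it.
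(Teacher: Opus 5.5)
Your blow-up argument for the AMP itself (contradiction sequence, $C^1$ limit $c\varphi_1$, sign of $c$ by testing with $\varphi_1$, Hopf property) is the paper's argument. The existence/uniqueness step, however, rests on a false embedding. H\"older's inequality gives $L^r(\Omega)\subset L^{q,\mu}(\Omega)$ for $r\ge qN/(N-\mu)$, and the reverse inclusion you state fails. In fact, for $0<\mu<N$ the space $L^{q,\mu}(\Omega)$ lies in no $L^r(\Omega)$ with $r>q$. For instance, take $f\approx\mathrm{dist}(x,E)^{-(N-\mu)/q}$ near a self-similar Cantor set $E$ of dimension $d<\mu$; then $f\in L^{q,\mu}$, but $f\notin L^r$ for $r\ge q(N-d)/(N-\mu)$, and this threshold tends to $q$ as $d\to\mu$.

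The paper explicitly remarks that admissible sources need not lie in $L^N(\Omega)$. If your embedding held, $f$ would be in $L^s$ with $s>N$, and the theorem would be a special case of the Lebesgue-space AMP. Consequently, three steps are unsupported: ``$f\in H^{-1}$ since $f\in L^s$'', the $W^{2,s}$ regularity, and the Lebesgue bootstrap in your uniqueness step. That bootstrap cannot get past $W^{2,q}$, because the source is only in $L^q$.

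The paper instead works directly in $L^q$. By the $W^{2,q}$ theory in $C^{1,1}$ domains, $(-\Delta)^{-1}:L^q\to W^{2,q}\cap W^{1,q}_0$ is bounded, hence compact on $L^q$. The Fredholm alternative in $L^q$ then yields a unique strong solution for every $\lambda\in(\lambda_1,\lambda_2)$. Uniqueness alone also follows directly: the difference of two strong solutions solves the homogeneous equation and bootstraps to a Dirichlet eigenfunction, hence vanishes.

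The same issue affects your uniform bound. You claim that boundedness in $W^{2,q}$ implies boundedness in $L^\infty$. But $W^{2,q}(\Omega)\not\subset L^\infty(\Omega)$ for $q\le N/2$, which the hypotheses permit (e.g.\ $N=4$, $q=2$, $\mu\in(2,4)$). Iterating the full equation in Lebesgue spaces is again blocked by $f$. With your normalization by $\|u_n\|_q$, you genuinely need that $\|u_n\|_q$ controls $\|u_n\|_{q,\mu}$. One way to get this:
\begin{enumerate}
\item Split $u_n=v+z_n$ with $-\Delta v=f$, so $v\in C^{1,\beta}(\overline\Omega)$ by the Morrey estimate, and $-\Delta z_n=\lambda_n z_n+\lambda_n v$.
\item Iterate the $W^{2,r}$ estimates and Sobolev embeddings finitely many times to bound $\|z_n\|_\infty$ by $C(\|u_n\|_q+\|v\|_\infty)$.
\item Use $\|g\|_{q,\mu}\le C\|g\|_\infty$.
\end{enumerate}
The same works for $w_n$, with $v/\|u_n\|_q$ in place of $v$.

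The paper sidesteps this by normalizing with $\|u_n\|_\infty$, which needs only the qualitative fact $u_n\in L^\infty$. The right-hand sides are then trivially bounded in $L^{q,\mu}(\Omega)$. The paper also derives the estimate $\|v\|_{C^{1,\alpha}(\overline\Omega)}\le C\|f\|_{q,\mu}$ from the global Morrey regularity theory of Di Fazio--Palagachev--Ragusa combined with the closed graph theorem. With these two repairs, the rest of your proof goes through.
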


It is not hard to deduce from \cite[Section~1.2.3, Example~8 and Theorem~18]{SFH} that  
there exists a nonnegative function $f\in L^{q,\mu}(\Omega)$ for some $q,\mu$ satisfying \eqref{eq:morrey:assumptions}, but $f \notin L^N(\Omega)$. 
That is, the regularity assumption of Theorem~\ref{thm:AMP-morrey} is weaker than the usual Lebesgue regularity in \eqref{eq:F}. 

We intentionally made this subsection the last one, in order to avoid overcomplications with translating the known results on \ref{AMP} and those established above to the weaker regularity assumption of Theorem~\ref{thm:AMP-morrey}. 
We leave the corresponding developments for interested readers.

\section{Auxiliary results}\label{sec:auxiliary}

In this section, we provide a few auxiliary statements. 
The proofs of Proposition~\ref{prop:AMP-convergence} and Theorem~\ref{thm:AMP} are mainly inspired by the arguments from \cite{BobkovTanakaAM,FGTT}. 
Theorem~\ref{thm:AMP} identifies the exact asymptotic profile of solutions of \eqref{D} as $\lambda \to \lambda_1$. 
It is of independent interest and can be seen as a generalization and refinement of the classical \ref{AMP}. 

In what follows, we will frequently consider some sequences $\{f_n\} \subset L^\gamma(\Omega)$, 
$\{\widetilde{\lambda}_n\} \subset (\lambda_1,\lambda_2)$, and $\{u_n\}\subset W_0^{1,p}(\Omega)$, where $u_n$ denotes a weak solution of the problem
\begin{equation}\label{eq:Dn}
 -\Delta_p u_n=\widetilde{\lambda}_n |u_n|^{p-2}u_n+f_n(x) \quad \text{in}~\Omega,  
 \quad u_n=0 \quad \text{on }\partial\Omega.
\end{equation}

\begin{proposition}\label{prop:AMP-convergence}
Let $\{f_n\}$ and $f$ satisfy \eqref{eq:F}, and $f_n\to f$ weakly in $L^\gamma(\Omega)$. 
Let $\{\widetilde{\lambda}_n\} \subset (\lambda_1,\lambda_2)$ and $\lambda^* \in [\lambda_1,\lambda_2)$ be such that $\widetilde{\lambda}_n \to \lambda^*$. 
Let $u_n$ be a solution of \eqref{eq:Dn}. 
Then $\{u_n\}$ is bounded in $L^p(\Omega)$ if and only if $\lambda^* > \lambda_1$.
Moreover, if $\{u_n\}$ is bounded in $L^p(\Omega)$, then 
$u_n \to u$ in $C^{1}(\overline{\Omega})$, 
up to a subsequence, where $u$ is a weak solution of
\begin{equation}\label{eq:D0}
 -\Delta_p u= \lambda^* |u|^{p-2}u+f(x) \quad \text{in}~\Omega,  
 \quad u=0 \quad \text{on }\partial\Omega.
\end{equation}
\end{proposition}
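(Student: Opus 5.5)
We first establish the convergence claim under $L^p$-boundedness, since it also drives the ``only if'' direction. Suppose $\{u_n\}$ is bounded in $L^p(\Omega)$. Testing \eqref{eq:Dn} with $u_n$ gives
$$
\int_\Omega |\nabla u_n|^p\,dx = \widetilde\lambda_n\int_\Omega|u_n|^p\,dx+\int_\Omega f_nu_n\,dx .
$$
The weakly convergent sequence $\{f_n\}$ is bounded in $L^\gamma(\Omega)$, and $\gamma>N$ gives $\gamma'<p^*$ (or $\gamma' \le p$ when $p\ge N$). Hence $\left|\int_\Omega f_nu_n\,dx\right|\le C\|u_n\|_{W_0^{1,p}}$ by Sobolev embedding. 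Since $\widetilde\lambda_n<\lambda_2$ and the $L^p$ norms are bounded, this yields boundedness of $\{u_n\}$ in $W_0^{1,p}(\Omega)$.

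Next we use the regularity results quoted from \cite[Propositions~A.1 and~A.3]{BobkovTanakaAM}. The right-hand side $\widetilde\lambda_n|u_n|^{p-2}u_n+f_n$ is bounded in $L^\gamma(\Omega)$, after a bootstrap giving an $L^\infty$ bound on $u_n$. Those results therefore provide a uniform bound for $u_n$ in $C^{1,\beta}(\overline\Omega)$, with constants depending only on the $L^\gamma$ bound. Arzelà--Ascoli then gives $u_n\to u$ in $C^1(\overline\Omega)$ along a subsequence. To pass to the limit in the weak formulation, note that $|\nabla u_n|^{p-2}\nabla u_n\to|\nabla u|^{p-2}\nabla u$ uniformly and $|u_n|^{p-2}u_n\to|u|^{p-2}u$ uniformly. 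Moreover, $\int_\Omega f_n\xi\,dx\to\int_\Omega f\xi\,dx$ for each test function $\xi$ by weak convergence. Thus $u$ solves \eqref{eq:D0}.

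For ``only if'', suppose $\lambda^*=\lambda_1$ but $\{u_n\}$ is bounded in $L^p(\Omega)$. By the above, $u$ solves $-\Delta_pu=\lambda_1|u|^{p-2}u+f$. Testing with $\varphi_1$ is problematic when $p\neq2$, so we use the standard nonexistence argument at $\lambda_1$ with nonnegative nontrivial source. One route is Picone's identity (e.g.\ in the form of Allegretto--Huang): if $u$ had a positive part, compare with $\varphi_1$. The cleaner route is to note that $-\Delta_pu-\lambda_1|u|^{p-2}u=f\ge0$, $f\not\equiv0$. The known result (see \cite{FGTT} or \cite{vaz}-type arguments) then says that $(\mathcal{D}_{\lambda_1})$ with such $f$ has no solution. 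I would prove it by showing $u>0$ via the strong maximum principle once $u\ge0$ (test with $u^-$), and then applying Picone's identity with $\varphi_1$ and $u+\epsilon$. This gives $0\le\int|\nabla\varphi_1|^p-\lambda_1\int\varphi_1^p-\int f\varphi_1^p/(u+\epsilon)^{p-1}<0$, a contradiction. Establishing $u\ge0$ is the main obstacle; testing with $u^-$ gives $\int|\nabla u^-|^p\le\lambda_1\int|u^-|^p$, so $u^-$ is a multiple of $\varphi_1$. In that case $u<0$ and $f=0$ on the relevant set, which is impossible. If $u^-\equiv0$ we are done.

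For ``if'', let $\lambda^*>\lambda_1$ and suppose $\|u_n\|_p\to\infty$ along a subsequence. Normalize $w_n=u_n/\|u_n\|_p$. Then
$$
-\Delta_pw_n=\widetilde\lambda_n|w_n|^{p-2}w_n+f_n/\|u_n\|_p^{p-1},
$$
and the source term tends to $0$ strongly in $W^{-1,p'}$ and in $L^\gamma$. The same compactness argument gives $w_n\to w$ in $C^1(\overline\Omega)$ with $\|w\|_p=1$ and $-\Delta_pw=\lambda^*|w|^{p-2}w$. So $\lambda^*$ is an eigenvalue in $(\lambda_1,\lambda_2)$, which is impossible by the isolation of $\lambda_1$ and the variational characterization \eqref{eq:lambda2-char}. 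Indeed, any eigenfunction with eigenvalue above $\lambda_1$ changes sign, and its positive and negative parts give $\max\le\lambda^*$, forcing $\lambda^*\ge\lambda_2$. The same normalization applied with $\lambda^*=\lambda_1$ is consistent: there, unboundedness is forced by the nonexistence shown above, completing the equivalence.
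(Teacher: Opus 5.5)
Your proposal is correct and follows the paper's proof step for step: the energy estimate, the $L^\infty$ and $C^{1,\beta}$ bounds, Arzel\`a--Ascoli, passing to the limit, and the normalization $w_n=u_n/\|u_n\|_p$ that produces an eigenvalue in $(\lambda_1,\lambda_2)$; the only difference is that you prove inline the nonexistence for $(\mathcal{D}_{\lambda_1})$ and the absence of eigenvalues in $(\lambda_1,\lambda_2)$ via \eqref{eq:lambda2-char}, which the paper simply cites. One small fix in your Picone step: at fixed $\epsilon$ the $\lambda_1$-term carries the weight $u^{p-1}/(u+\epsilon)^{p-1}$, so you must let $\epsilon\to0$, or drop $\epsilon$ altogether and test with $\varphi_1^p/u^{p-1}$, which is admissible since $u>0$ and $\partial_\nu u<0$ (as in the proof of Theorem~\ref{thm:AMP}).
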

\begin{proof}
Notice that since $\{f_n\}$ weakly converges in $L^\gamma(\Omega)$, it is bounded therein. 
Since $u_n$ is a solution of \eqref{eq:Dn}, it satisfies the energy identity
\begin{equation}\label{eq:energy1}
\int_\Omega |\nabla u_n|^p \,dx
=
\widetilde{\lambda}_n \int_\Omega |u_n|^p \,dx + \int_\Omega f_n u_n \,dx.
\end{equation}
Using the H\"older inequality, the convergence of $\{\widetilde{\lambda}_n\}$, the boundedness of $\{f_n\}$ in $L^\gamma(\Omega)$, and the continuity of the embedding $W^{1,p}_0(\Omega)\hookrightarrow L^{\gamma'}(\Omega)$, where $\gamma'=\gamma/(\gamma-1)$, we obtain
\begin{equation}\label{eq:proof:amp0}
\|\nabla u_n\|_p^p
\leq
\widetilde{\lambda}_n
\|u_n\|_p^p
+
\|f_n\|_\gamma \|u_n\|_{\gamma'}
\leq 
C\|u_n\|_p^p
+
C\|\nabla u_n\|_p,
\end{equation}
where $C>0$ does not depend on $n$.
 
If $\{u_n\}$ is bounded in $L^p(\Omega)$, then we deduce from \eqref{eq:proof:amp0} that $\{u_n\}$ is also bounded in $W^{1,p}_0(\Omega)$. 
Therefore, there exists $u \in W^{1,p}_0(\Omega)$ such that $u_n \to u$ weakly in $W^{1,p}_0(\Omega)$ and strongly in $L^{\gamma'}(\Omega)$, up to a subsequence. 
Let us show that, in fact, we can pass to a further subsequence so that $u_n\to u$ in $C^1(\overline\Omega)$. 
Denote the right-hand side of \eqref{eq:Dn} as $g_n$, that is,
$$
g_n(x) = \widetilde{\lambda}_n |u_n(x)|^{p-2}u_n(x)
+
f_n(x), \quad x \in \Omega. 
$$
Observe that the sequence $\{g_n\}$ is bounded in $L^\gamma(\Omega)$. 
Indeed, since $\{\|f_n\|_\gamma\}$ and $\{\widetilde{\lambda}_n\}$ are bounded, it is sufficient to justify the boundedness of $\{\|u_n\|_\infty\}$. 
If $N < p$, then the result is a consequence of the boundedness of $\{\|\nabla u_n\|_p\}$ and the Morrey lemma. 
If $N \geq p$, then we use \cite[Proposition~A.1]{BobkovTanakaAM} to get
$$
\|u_n\|_\infty \leq C(1+ \|u_n\|_r) \leq C+ C\|\nabla u_n\|_p,
$$
where $r \in (p\gamma',p^*)$ and $C>0$ do not depend on $n$. 
In view of the boundedness of $\{\|\nabla u_n\|_p\}$, we conclude that $\{\|u_n\|_\infty\}$ is also bounded, and hence so is $\{\|g_n\|_\gamma\}$. 
Therefore, \cite[Proposition~A.3]{BobkovTanakaAM} yields the existence of $\beta \in (0,1)$ and $C>0$ such that $u_n \in C^{1,\beta}(\overline{\Omega})$ and $\|u_n\|_{C^{1,\beta}(\overline{\Omega})} \leq C$ for any $n$. 
By the Arzel\`a-Ascoli theorem, there exists $v$ such that $u_n \to v$ in $C^1(\overline\Omega)$, up to a subsequence. It is not hard to see that $v \equiv u$ in $\Omega$.
Indeed, we have
$$
\|u_n - v\|_{W^{1,p}(\Omega)}
\leq 
|\Omega|^{1/p} \left(
\|u_n - v\|_{\infty}
+
\|\nabla u_n - \nabla v\|_{\infty}
\right) \to 0,
$$
and hence $v$ is a $W^{1,p}$-limit of $\{u_n\}$. 
Recalling that $u$ is a weak $W_0^{1,p}$-limit of $\{u_n\}$, we conclude that $u$ coincides with $v$ a.e.\ in $\Omega$. 
Passing to the limit in the weak formulation of \eqref{eq:Dn}, 
we further deduce that $u$ weakly solves \eqref{eq:D0}. 
Noting that $f$ is nontrivial by the assumption \eqref{eq:F}, we see that $u$ is also nontrivial. 
However, if $\lambda^*=\lambda_1$, then the problem \eqref{eq:D0} cannot have solutions by \cite[Corollary~2.19]{BobkovTanakaAM}. Thus, $\lambda^* > \lambda_1$. 

Assume now that $\lambda^* \in (\lambda_1, \lambda_2)$, and let us show that $\{\|u_n\|_p\}$ is bounded.
Suppose, by contradiction, that there exists a subsequence of $\{u_n\}$ along which $\|u_n\|_p\to +\infty$.  
Let us normalize $u_n$ by considering $w_n:=u_n/\|u_n\|_p$, so that $\|w_n\|_p=1$ and $w_n$ is a weak solution of 
\begin{equation}\label{eq:Dn2}
-\Delta_p w_n
=
\widetilde{\lambda}_n |w_n|^{p-2}w_n
+
\frac{f_n(x)}{\|u_n\|_p^{p-1}}
\quad \text{in}~ \Omega, \quad w_n=0 \quad\text{on}~\partial\Omega.
\end{equation}
Taking $w_n$ as a test function in the weak formulation of \eqref{eq:Dn2}, we get
\begin{equation}\label{eq:proof:amp1}
\|\nabla w_n\|_p^p
=
\widetilde{\lambda}_n
+
\frac{\int_\Omega f_n w_n\,dx}{\|u_n\|_p^{p-1}}.
\end{equation}
Arguing as in the derivation of \eqref{eq:proof:amp0}, we use the H\"older inequality 
to get
\begin{equation}\label{eq:proof:amp2}
\|\nabla w_n\|_p^p
\leq 
C
+
C\frac{\|\nabla w_n\|_p}{\|u_n\|_p^{p-1}},
\end{equation}
where $C>0$ does not depend on $n$. 
Since $\{\|u_n\|_p\}$ diverges, we deduce that $\{w_n\}$ is bounded in $W_0^{1,p}(\Omega)$, and hence $w_n \to w$ weakly in $W_0^{1,p}(\Omega)$ and strongly in $L^{\gamma'}(\Omega)$ for some $w \in W_0^{1,p}(\Omega)$, up to a subsequence. 
Arguing as above, one can also prove that $w_n \to w$ in $C^1(\overline{\Omega})$. 
Thus, we have $\|w\|_p=1$ and, passing to the limit in the weak formulation of \eqref{eq:Dn2}, we conclude that $\lambda^*$ is an eigenvalue and $w$ is a corresponding eigenfunction. 
However, recall that there are no eigenvalues between $\lambda_1$ and $\lambda_2$, see, e.g., \cite{anane1987}.  
This contradiction shows that the whole sequence $\{u_n\}$ is bounded in $L^p(\Omega)$. 
\end{proof}

\begin{theorem}\label{thm:AMP}
Let $\{f_n\}$ and $f$ satisfy \eqref{eq:F}, and $f_n\to f$ weakly in $L^\gamma(\Omega)$. 
Let $\{\widetilde{\lambda}_n\} \subset (\lambda_1,\lambda_2)$ be such that $\widetilde\lambda_n \to \lambda_1$. 
Let $u_n$ be a solution of \eqref{eq:Dn}. 
Then
\begin{equation}\label{eq:Un}
 (\widetilde{\lambda}_n-\lambda_1)^{1/(p-1)}u_n
 \to 
 -\left(\int_\Omega f\varphi_1\,dx\right)^{1/(p-1)}\varphi_1
 \quad\text{in }C^1(\overline\Omega).
\end{equation}
In particular, $u_n < 0$ in $\Omega$ and $\partial_\nu u_n >0$ on $\partial\Omega$ for any sufficiently large $n$.
\end{theorem}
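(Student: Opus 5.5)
The plan is a blow-up/normalization argument in the spirit of the second half of the proof of Proposition~\ref{prop:AMP-convergence}.

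\emph{Step 1: normalize.} By Proposition~\ref{prop:AMP-convergence} with $\lambda^*=\lambda_1$, the sequence $\{u_n\}$ is unbounded in $L^p(\Omega)$. Since the same applies to every subsequence, $\|u_n\|_p\to+\infty$ along the whole sequence. Set $w_n=u_n/\|u_n\|_p$. Then $w_n$ solves \eqref{eq:Dn2}, and the estimates \eqref{eq:proof:amp1}--\eqref{eq:proof:amp2} together with the $L^\infty$ and $C^{1,\beta}$ bounds from \cite[Propositions~A.1 and~A.3]{BobkovTanakaAM} give $w_n\to w$ in $C^1(\overline\Omega)$ along a subsequence. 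Passing to the limit, $w$ is an eigenfunction for $\lambda_1$ with $\|w\|_p=1$, so $w=\pm\varphi_1$.

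\emph{Step 2: fix the sign and the rate.} Test \eqref{eq:Dn} with $\varphi_1$, and the eigenvalue equation for $\varphi_1$ with $u_n$, then subtract. This gives the Picone-type identity
\[
(\widetilde\lambda_n-\lambda_1)\int_\Omega |u_n|^{p-2}u_n\varphi_1\,dx+\int_\Omega f_n\varphi_1\,dx
=\int_\Omega |\nabla u_n|^{p-2}\nabla u_n\cdot\nabla\varphi_1\,dx-\lambda_1\int_\Omega|u_n|^{p-2}u_n\varphi_1\,dx .
\]
For $p=2$ the right-hand side vanishes identically. Dividing by $\|u_n\|_p^{p-1}$ then yields
\[
(\widetilde\lambda_n-\lambda_1)\,\|u_n\|_p^{p-1}\int_\Omega |w_n|^{p-2}w_n\varphi_1\,dx=-\int_\Omega f_n\varphi_1\,dx\to-\int_\Omega f\varphi_1\,dx<0 .
\]
Since $\int_\Omega |w_n|^{p-2}w_n\varphi_1\,dx\to\pm1$, this forces $w=-\varphi_1$ and $(\widetilde\lambda_n-\lambda_1)\|u_n\|_p^{p-1}\to\int_\Omega f\varphi_1\,dx$. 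Multiplying the $C^1$ convergence $w_n\to-\varphi_1$ by $(\widetilde\lambda_n-\lambda_1)^{1/(p-1)}\|u_n\|_p$ gives \eqref{eq:Un}. Uniqueness of the limit upgrades subsequential convergence to convergence of the whole sequence.

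\emph{Step 3: the main obstacle, $p\neq 2$.} Here the right-hand side of the identity in Step 2 does not vanish, so the same trick is unavailable. Instead, I would decompose $w_n=-s_n\varphi_1+r_n$, with $s_n\to1$ and $r_n\to0$ in $C^1(\overline\Omega)$, and estimate the defect
\[
\int_\Omega |\nabla w_n|^{p-2}\nabla w_n\cdot\nabla\varphi_1\,dx-\lambda_1\int_\Omega |w_n|^{p-2}w_n\varphi_1\,dx .
\]
This defect is $o(\cdot)$ of the main terms once one uses that $w_n$ is $C^1$-close to $-\varphi_1$. The key point is that $\partial_\nu\varphi_1<0$ on $\partial\Omega$ and $\varphi_1>0$ inside give nondegeneracy of $\nabla\varphi_1$ near the boundary, so the $p$-Laplacian linearizes there. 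The goal is to show the defect is $o\bigl((\widetilde\lambda_n-\lambda_1)\bigr)$ relative to the other terms.

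A cleaner alternative for $p\neq2$, which I would try first, is to follow \cite{FGTT}. Test \eqref{eq:Dn2} with $w_n$ and use the variational characterization \eqref{eq:lambda1}:
\[
\lambda_1\le\|\nabla w_n\|_p^p=\widetilde\lambda_n+\|u_n\|_p^{1-p}\int_\Omega f_nw_n\,dx .
\]
Since $\int_\Omega f_nw_n\,dx\to\int_\Omega fw\,dx$, this inequality excludes $w=+\varphi_1$ only in a limit sense and yields the upper bound $\limsup(\widetilde\lambda_n-\lambda_1)\|u_n\|_p^{p-1}\le\int_\Omega f\varphi_1\,dx$. The matching lower bound then requires the second-order expansion of the Rayleigh quotient near $\varphi_1$. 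This quantitative rate statement is where I expect the real work to lie. The qualitative conclusion, $u_n<0$ in $\Omega$ and $\partial_\nu u_n>0$ on $\partial\Omega$, already follows from $w_n\to-\varphi_1$ in $C^1(\overline\Omega)$ together with \eqref{eq:bpl}.
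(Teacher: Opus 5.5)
Your Step~2 is correct for $p=2$, and it takes a different route from the paper. Testing with $\varphi_1$ and using self-adjointness gives the exact identity $(\widetilde\lambda_n-\lambda_1)\int_\Omega u_n\varphi_1\,dx=-\int_\Omega f_n\varphi_1\,dx$, which fixes the sign and the rate at once. The theorem, however, is stated for every $p>1$, and for $p\neq2$ your proposal establishes neither the sign nor the rate.

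\emph{The sign.} The inequality $\lambda_1\le\widetilde\lambda_n+\|u_n\|_p^{1-p}\int_\Omega f_nw_n\,dx$ does not exclude $w=+\varphi_1$ in any sense. If $w=\varphi_1$, the last term is eventually positive and the inequality holds trivially. The missing idea is that \eqref{D} has no positive solution when $\lambda>\lambda_1$ and $f\ge0$. This follows from Picone's inequality with test function $\varphi_1^p/u^{p-1}$, which gives $\lambda_1\ge\lambda+\int_\Omega f\varphi_1^p/u^{p-1}\,dx\ge\lambda$. If $w=\varphi_1$, then $C^1$-convergence and \eqref{eq:bpl} give $u_n>0$ in $\Omega$ for large $n$, a contradiction. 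This is exactly how the paper obtains $w=-\varphi_1$. Without it, your closing remark that the qualitative conclusion ``already follows'' rests on something you have not proved.

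\emph{The rate.} Your inequality points the wrong way. Once $w=-\varphi_1$ is known, it reads $(\widetilde\lambda_n-\lambda_1)\|u_n\|_p^{p-1}\ge-\int_\Omega f_nw_n\,dx\to\int_\Omega f\varphi_1\,dx$. So it gives the lower bound on the $\liminf$, not the upper bound on the $\limsup$ as you claim. The missing upper bound needs no second-order expansion of the Rayleigh quotient. It comes from the same Picone inequality, now with test function $\varphi_1^p/(-u_n)^{p-1}$, which is admissible because $-w_n\asymp\operatorname{dist}(x,\partial\Omega)$ near $\partial\Omega$:
\[
\lambda_1=\int_\Omega|\nabla\varphi_1|^p\,dx\ge\widetilde\lambda_n-\int_\Omega f_n\frac{\varphi_1^p}{(-u_n)^{p-1}}\,dx,
\quad\text{hence}\quad
(\widetilde\lambda_n-\lambda_1)\|u_n\|_p^{p-1}\le\int_\Omega f_n\Bigl(\frac{\varphi_1}{-w_n}\Bigr)^{p-1}\varphi_1\,dx\to\int_\Omega f\varphi_1\,dx.
\]
The convergence holds since $\varphi_1/(-w_n)\to1$ uniformly in $\Omega$.

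Your alternative (a) does not replace this. The first variation of the defect at $-\varphi_1$ vanishes by the eigenvalue equation. So showing that the defect is $o(\widetilde\lambda_n-\lambda_1)$ amounts to a quantitative bound on $w_n+\varphi_1$, which requires linearizing $-\Delta_p$ at $\varphi_1$. That linearization degenerates for $p>2$, and is singular for $p<2$, at the interior critical points of $\varphi_1$. There, the nondegeneracy of $\nabla\varphi_1$ near $\partial\Omega$ that you invoke is of no help. As written, (a) is a program rather than a proof.
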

\begin{proof}
Applying Proposition~\ref{prop:AMP-convergence} to any subsequence of $\{u_n\}$, we see that $\|u_n\|_p\to +\infty$. 
Let us normalize $u_n$ by considering $w_n:=u_n/\|u_n\|_p$, so that $\|w_n\|_p=1$. 
Arguing as in the proof of Proposition~\ref{prop:AMP-convergence}, we deduce that \eqref{eq:Dn2}, \eqref{eq:proof:amp1}, and \eqref{eq:proof:amp2} are also satisfied. In particular, $\{w_n\}$ is bounded in $W_0^{1,p}(\Omega)$. 
Therefore, \eqref{eq:proof:amp1} and $\widetilde\lambda_n \to \lambda_1$ yield
$$
\|\nabla w_n\|_p^p\to\lambda_1.
$$
Recalling that $\|w_n\|_p=1$, the variational characterization \eqref{eq:lambda1} of $\lambda_1$ and its simplicity imply that $w_n\to \sigma\varphi_1$ in $W^{1,p}_0(\Omega)$ and hence in $L^{\gamma'}(\Omega)$, up to a subsequence,
where $\sigma\in\{-1,1\}$.
In fact, we have $w_n\to \sigma\varphi_1$ in $C^1(\overline\Omega)$, up to a further subsequence.
Indeed, denoting the right-hand side of \eqref{eq:Dn2} as $g_n$, 
it can be proved as in Proposition~\ref{prop:AMP-convergence} that $\{g_n\}$ is bounded in $L^\gamma(\Omega)$, which yields the desired convergence via the results of \cite{BobkovTanakaAM} and the Arzel\`a-Ascoli theorem.

If $\sigma=1$, then $w_n>0$ in $\Omega$ and hence $u_n>0$ in $\Omega$ for all sufficiently large $n$.
However, this contradicts \cite[Proposition~2.17]{BobkovTanakaAM} (or, equivalently, the combination of \cite[Theorem~2.1]{Alleg} and \cite{vaz}), which states that \eqref{D} has no nonnegative solutions for any $\lambda>\lambda_1$. 
Therefore, we have $\sigma=-1$, i.e., $w_n\to -\varphi_1$ in $C^1(\overline\Omega)$.
This yields $w_n<0$ in $\Omega$ and $\partial_\nu w_n>0$ on $\partial\Omega$ for all sufficiently large $n$, and the same inequalities are satisfied for $u_n$. 

In order to obtain \eqref{eq:Un}, it remains to identify the asymptotic behavior of $\|u_n\|_p$.
Observe that $\|w_n\|_p=1$ implies $\|\nabla w_n\|_p^p \geq \lambda_1$ in view of \eqref{eq:lambda1}. 
Hence, we deduce from \eqref{eq:proof:amp1}, the strong convergence $w_n \to -\varphi_1$ in $L^{\gamma'}(\Omega)$, and the weak convergence $f_n \to f$ in $L^\gamma(\Omega)$ that
\begin{equation}\label{eq:proof:amp3}
(\lambda_1 -
\widetilde{\lambda}_n)\|u_n\|_p^{p-1}
\leq 
(\|\nabla w_n\|_p^p -
\widetilde{\lambda}_n) \|u_n\|_p^{p-1}
=
\int_\Omega f_n w_n\,dx \to -\int_\Omega f \varphi_1 \,dx.
\end{equation}
To obtain the reverse inequality, we first note that \eqref{eq:bpl} and the $C^1(\overline{\Omega})$-convergence yield 
\begin{equation}\label{eq:asympt2}
\varphi_1(x)\asymp \text{dist}(x,\partial\Omega)
\quad \text{and} \quad 
-w_n(x) \asymp \text{dist}(x,\partial\Omega)
\end{equation}
in a neighborhood of $\partial\Omega$, and hence it is not hard to see that $\varphi_1^p/(-u_n)^{p-1} \in W_0^{1,p}(\Omega) \cap C^1(\Omega)$.  
Now we apply the Picone inequality \cite{Alleg} in the weak formulation of \eqref{eq:Dn} as follows:
\begin{align}
 \lambda_1
 =
 \int_\Omega |\nabla\varphi_1|^p\,dx
 &\ge
 \int_\Omega |\nabla (-u_n)|^{p-2} \nabla (-u_n) \cdot \nabla \left(\frac{\varphi_1^p}{(-u_n)^{p-1}}\right)\,dx
 \\
 &=
 \int_\Omega (\widetilde{\lambda}_n (-u_n)^{p-1}-f_n)\frac{\varphi_1^p}{(-u_n)^{p-1}}\,dx
 =
 \widetilde{\lambda}_n-\int_\Omega f_n\frac{\varphi_1^p}{(-u_n)^{p-1}}\,dx.
\end{align}
Multiplying by $\|u_n\|_p^{p-1}$, we obtain
\begin{equation}\label{eq:proof:amp4}
 (\widetilde{\lambda}_n-\lambda_1)\|u_n\|_p^{p-1}
 \le
 \int_\Omega f_n\frac{\varphi_1^p}{(-w_n)^{p-1}}\,dx.
\end{equation}
Since $w_n \to -\varphi_1$ in $C^1(\overline{\Omega})$ and $\varphi_1$ satisfies \eqref{eq:bpl}, we recall \eqref{eq:asympt2} and get 
$$
\frac{\varphi_1^p}{(-w_n)^{p-1}}
=
\left(\frac{\varphi_1}{-w_n}\right)^{p-1} \varphi_1 \to \varphi_1
\quad \text{in}~ \overline{\Omega}.
$$ 
In particular, this convergence is strong in $L^{\gamma'}(\Omega)$. Thus, by the weak convergence $f_n \to f$ in $L^\gamma(\Omega)$, we deduce that 
\begin{equation}\label{eq:proof:amp5}
 \int_\Omega f_n\frac{\varphi_1^p}{(-w_n)^{p-1}}\,dx
 \to
 \int_\Omega f \varphi_1\,dx.
\end{equation}
Combining \eqref{eq:proof:amp3}, \eqref{eq:proof:amp4}, \eqref{eq:proof:amp5}, we arrive at
$$
 (\widetilde{\lambda}_n-\lambda_1) \|u_n\|_p^{p-1}
 \to
 \int_\Omega f\varphi_1\,dx.
$$
Recalling that $w_n = u_n/\|u_n\|_p\to-\varphi_1$ in $C^1(\overline\Omega)$, we derive the asymptotic formula \eqref{eq:Un} along an appropriately chosen subsequence of $\{u_n\}$. 
Applying the same analysis to any subsequence, we conclude that the whole sequence $\{u_n\}$ satisfies \eqref{eq:Un}.
\end{proof}

\begin{lemma}\label{lem:properties-of-U}
The set $\mathcal{U}_\eta \cup \{0\}$ is a convex cone. 
Moreover, if $\{f_n\} \subset \mathcal{U}_\eta$ is such that $\|f_n\|_\gamma=1$, 
then any weak $L^\gamma(\Omega)$-limit of $\{f_n\}$ belongs to $\mathcal{U}_\eta$.
\end{lemma}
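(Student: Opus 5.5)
The argument splits into two independent parts: the cone property and the weak closure property.

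\textbf{Convex cone.} Let $f,g \in \mathcal{U}_\eta$ and $s,t \geq 0$ with $s+t>0$, and set $h = sf+tg$.
\begin{itemize}
\item Each of the three conditions in \eqref{eq:F} passes to $h$. Clearly $h \geq 0$ and $h \in L^\gamma(\Omega)$.
\item $h \not\equiv 0$ because $f,g \geq 0$, neither is trivial, and $s+t>0$.
\item Linearity of the numerator of $\Theta$ gives
$$
\int_\Omega h\varphi_1\,dx = s\int_\Omega f\varphi_1\,dx + t\int_\Omega g\varphi_1\,dx \geq \eta\left(s\|f\|_\gamma + t\|g\|_\gamma\right).
$$
\item The triangle inequality gives $\|h\|_\gamma \leq s\|f\|_\gamma + t\|g\|_\gamma$. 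Combined with the previous line, this yields $\Theta(h) \geq \eta$.
\end{itemize}
Positive homogeneity of degree zero of $\Theta$ shows that $\mathcal{U}_\eta$ is invariant under multiplication by $c>0$. Including $0$ then makes $\mathcal{U}_\eta \cup \{0\}$ a convex cone.

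\textbf{Weak closure.} Let $\{f_n\} \subset \mathcal{U}_\eta$ with $\|f_n\|_\gamma = 1$, and let $f_n \rightharpoonup f$ weakly in $L^\gamma(\Omega)$.
\begin{itemize}
\item Since $\varphi_1 \in C^{1,\beta}(\overline\Omega) \subset L^{\gamma'}(\Omega)$, the map $g \mapsto \int_\Omega g\varphi_1\,dx$ is a continuous linear functional. Hence $\int_\Omega f\varphi_1\,dx = \lim_n \int_\Omega f_n\varphi_1\,dx \geq \eta > 0$.
\item In particular $f \not\equiv 0$, which is the step that excludes the degenerate limit.
\item Nonnegativity is preserved: testing against $\chi_{\{f<0\}} \in L^{\gamma'}(\Omega)$ gives $\int_{\{f<0\}} f\,dx = \lim_n \int_{\{f<0\}} f_n\,dx \geq 0$, so $f \geq 0$ a.e. Alternatively, the set of nonnegative functions is convex and closed, hence weakly closed.
\item Weak lower semicontinuity of the norm gives $\|f\|_\gamma \leq \liminf_n \|f_n\|_\gamma = 1$.
\item Therefore $\Theta(f) = \int_\Omega f\varphi_1\,dx / \|f\|_\gamma \geq \eta / 1 = \eta$. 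Here the normalization $\|f_n\|_\gamma = 1$ is essential, and dividing is legitimate since $\|f\|_\gamma > 0$.
\end{itemize}

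No step presents a real obstacle. The only points needing care are the nontriviality of the limit $f$, which comes from the positive lower bound $\eta$ on $\int_\Omega f\varphi_1\,dx$, and the direction of the norm inequality under weak convergence.
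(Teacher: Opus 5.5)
Your proof is correct and follows essentially the same route as the paper: linearity of $f\mapsto\int_\Omega f\varphi_1\,dx$ plus the triangle inequality for the cone property, then weak continuity of that functional, preservation of nonnegativity under weak limits, and weak lower semicontinuity of $\|\cdot\|_\gamma$ (using $\|f_n\|_\gamma=1$) for the closure property. Your extra details, namely the nontriviality of $sf+tg$ when $s+t>0$ and the test function $\chi_{\{f<0\}}$ for nonnegativity, only fill in steps the paper states briefly.
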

\begin{proof}
If $f,g \in \mathcal{U}_\eta$, then, for any $s,t\ge0$,
$$
\int_\Omega (sf+tg)\varphi_1\,dx
\ge
\eta\bigl(s\|f\|_\gamma+t\|g\|_\gamma\bigr)
\ge
\eta\|sf+tg\|_\gamma,
$$
so that $sf+tg \in \mathcal{U}_\eta \cup \{0\}$. 
That is, $\mathcal{U}_\eta \cup \{0\}$ is a convex cone. 

Let $\{f_n\} \subset \mathcal{U}_\eta$ be such that $\|f_n\|_\gamma=1$. 
Thus, $f_n \to f$ weakly in $L^\gamma(\Omega)$ for some $f \in L^\gamma(\Omega)$, up to a subsequence.  
The weak convergence implies that $f \geq 0$ in $\Omega$ and 
$$
\int_\Omega f\varphi_1\,dx
=
\lim_{n\to+\infty}\int_\Omega f_n\varphi_1\,dx
\geq
\eta \liminf_{n\to+\infty} \|f_n\|_\gamma
= 
\eta.
$$
Hence, $f\not\equiv0$ in $\Omega$, and the weak lower semicontinuity $\|f\|_\gamma \leq \liminf_{n\to+\infty} \|f_n\|_\gamma$ yields 
$$
\Theta(f)
= 
\frac{\int_\Omega f\varphi_1\,dx}{\|f\|_\gamma} 
\geq 
\limsup_{n\to+\infty} 
\frac{\int_\Omega f_n \varphi_1\,dx}{\|f_n\|_\gamma} \geq \eta,
$$
so that $f \in \mathcal{U}_\eta$.
\end{proof}

\begin{lemma}\label{lem:implication}
Let $p=2$ and $N=1$.  
Then there exist $\delta>0$ and $\{f_n\}$ satisfying \eqref{eq:F} such that $\Theta(f_n) \to 0$, but $\lambda_{f_n} \geq \lambda_1 + \delta$ for any $n$. 
\end{lemma}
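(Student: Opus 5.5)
We will build the sequence explicitly in the one-dimensional linear setting. Take $\Omega=(0,\pi)$, so $\lambda_1=1$, $\lambda_2=4$, $\varphi_1=c\sin x$ and the eigenfunctions are $\sin kx$. The aim is a sequence of nonnegative sources that concentrate their $L^\gamma$-mass near the boundary, where $\varphi_1$ is small, so that $\Theta(f_n)\to0$. We will use Theorem~\ref{thm:UAMP}'s mechanism in reverse: first normalize $f_n$ in another way, $\|f_n\|_1=1$, and then get $\Theta(f_n)\to 0$ through $\|f_n\|_\gamma\to\infty$ while $\int f_n\varphi_1$ stays bounded below.

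The concrete choice is $f_n=\sin x+ n^{a}\chi_{(0,1/n)}$ for a suitable $a$. Choosing $a=1$, the piece $h_n=n\chi_{(0,1/n)}$ has $\|h_n\|_1=1$ and $\|h_n\|_\gamma=n^{1-1/\gamma}\to\infty$. Meanwhile $\int h_n\sin x\,dx\approx 1/(2n)\to0$. Hence $\Theta(f_n)\le (C+o(1))/n^{1-1/\gamma}\to0$. Since $\lambda_{cf}=\lambda_f$ by \eqref{eq:lambdacf}, the normalization does not matter.

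The main work is showing $\lambda_{f_n}\ge 1+\delta$ uniformly in $n$. For $p=2$, $N=1$ the solution of \eqref{D} for $\lambda\in(1,4)$ can be written with the Green's function $G_\lambda(x,y)$ of $-u''-\lambda u$ on $(0,\pi)$, $\lambda=k^2$:
$$G_\lambda(x,y)=-\frac{\sin(k\min\{x,y\})\,\sin(k(\pi-\max\{x,y\}))}{k\sin(k\pi)}.$$
We have $\sin k\pi<0$ for $k\in(1,2)$. For $k\in(1,k_0]$ with $k_0$ close to $1$, the factor $\sin(k(\pi-s))$ stays positive as long as $s> \pi-\pi/k$, and it is negative near $s=0$ once $k>1$. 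So $G_\lambda$ is not of one sign. The plan is therefore to split $u=u^{(1)}+u^{(2)}$, where $u^{(1)}$ solves with source $\sin x$ and $u^{(2)}$ with source $h_n$.

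First, $u^{(1)}=-\sin x/(\lambda-1)$, which is strictly negative with nonzero boundary slopes of size $\sim 1/(\lambda-1)$. Second, since $\|h_n\|_1=1$, we get $|u^{(2)}(x)|\le \int G_\lambda(x,y)h_n(y)\,dy$, and $|G_\lambda(x,y)|\le C\,\min\{x,\pi-x\}/|\sin k\pi|$ uniformly for $k\in(1,k_0]$. Here $|\sin k\pi|\approx \pi(k-1)\approx \tfrac{\pi}{2}(\lambda-1)$. This gives $|u^{(2)}(x)|\le C'\sin x/(\lambda-1)$, which has the same order as $u^{(1)}$.

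This is the main obstacle: a crude bound does not close, and we need the sign or the sharp constant of the singular part. We resolve it by the spectral splitting $G_\lambda=-\frac{\varphi_1(x)\varphi_1(y)}{\lambda-1}+R_\lambda(x,y)$, where $R_\lambda$ is bounded in $C^1$ uniformly for $\lambda\in(1,k_0^2]$ and vanishes at $x=0,\pi$. The full source then gives
$$u=-\frac{\int f_n\varphi_1}{\lambda-1}\varphi_1+\int R_\lambda(\cdot,y)f_n(y)\,dy.$$
The first term has coefficient $\ge \int\sin x\,\varphi_1/(\lambda-1)$. The remainder is bounded in $C^1$ by $C\|f_n\|_1=C(2+1)$, which in one dimension is uniform precisely because $R_\lambda$ is $C^1$ in $x$ uniformly in $y$. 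Since $\varphi_1$ has nonzero boundary derivative, $u<0$ with $u'(0)<0<u'(\pi)$ for all $\lambda\in(1,1+\delta)$, with $\delta$ independent of $n$. Solutions exist by the Fredholm alternative, so $\lambda_{f_n}\ge1+\delta$.
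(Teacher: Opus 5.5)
Your argument is correct, but it takes a different route from the paper. The paper uses $f_n=\chi(I_n)$ with $I_n$ shrinking to the interior point $1/3$ of $(0,1)$. It then reads the sign directly off the explicit Green's function: for $\sqrt{\lambda}\in(\pi,\pi/(2/3+\varepsilon_n))$ one has $G_\lambda(x,y)<0$ for all $x$ and all $y\in I_n$. This gives negativity and the correct boundary slopes with an explicit $\delta$ and no estimates at all.

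You instead split $G_\lambda=-\varphi_1(x)\varphi_1(y)/(\lambda-1)+R_\lambda$ and control the regular part by $\|f_n\|_1$, which is the genuinely one-dimensional ingredient. The fixed component $\sin x$ is what keeps $\int_\Omega f_n\varphi_1\,dx$ bounded below relative to $\|f_n\|_1$; for $h_n$ alone this ratio tends to $0$.

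Your route buys more than the lemma. For $N=1$ and any $\Lambda<\lambda_2$ it shows $\lambda_f\ge\min\{\Lambda,\lambda_1+c\int_\Omega f\varphi_1\,dx/\|f\|_1\}$, an $L^1$-analogue of \eqref{eq:lambdaflowerbound}. This explains why the lemma holds: both your example and the paper's keep $\int_\Omega f_n\varphi_1\,dx/\|f_n\|_1$ bounded below while $\|f_n\|_\gamma/\|f_n\|_1\to+\infty$. It therefore covers the paper's example too.

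The price is that the uniform bound on $R_\lambda$, which you only assert, must be proved. One quick way:
\begin{enumerate}
\item Write $u=a\varphi_1+w$ with $w\perp\varphi_1$. Coercivity on $\varphi_1^\perp$ gives $(1-\Lambda/4)\|w'\|_2^2\le\int_0^\pi fw\,dx\le\sqrt{\pi}\,\|f\|_1\|w'\|_2$.
\item Hence $\|w\|_\infty\le\sqrt{\pi}\,\|w'\|_2\le C\|f\|_1$, and from the equation for $w$ also $\|w''\|_1\le C\|f\|_1$.
\item Rolle's theorem gives a point where $w'$ vanishes, so $\|w'\|_\infty\le\|w''\|_1$.
\item Since $w(0)=w(\pi)=0$, this yields $|w(x)|\le C\|f\|_1\min\{x,\pi-x\}$ and $|w'(0)|,|w'(\pi)|\le C\|f\|_1$, which is exactly what you need.
\end{enumerate}

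There are also some minor slips:
\begin{itemize}
\item Your explicit $G_\lambda$ has the wrong overall sign. It should be $\sin(k\min\{x,y\})\sin(k(\pi-\max\{x,y\}))/(k\sin k\pi)$, which is consistent with your later splitting.
\item $R_\lambda(\cdot,y)$ is only Lipschitz because of the kink at $x=y$. However, $\int R_\lambda(\cdot,y)f_n(y)\,dy$ is $C^1$ with the bound you need.
\item The crude estimate needs $|G_\lambda|$ inside the integral.
\item $\|f_n\|_1=3$, not $1$.
\end{itemize}
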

\begin{proof}
Let $\Omega=(0,1)$, so that $\lambda_k=(k\pi)^2$, $\varphi_1 = \sqrt{2}\sin(\pi x)$, and $\|\varphi_1\|_\infty=\sqrt2$.  
Choose a sequence $\varepsilon_n \searrow 0$ and define
$$
I_n=\left(\frac13-\varepsilon_n,\frac13+\varepsilon_n\right) \subset \Omega,
\quad 
f_n=\chi(I_n)
\quad \text{in}~ \Omega,
$$
where $\chi(I_n)$ is the characteristic function of $I_n$. 
Noting that $\|f_n\|_\gamma=(2\varepsilon_n)^{1/\gamma}$, we have
$$
 0<\Theta(f_n)
 \leq\sqrt2\,(2\varepsilon_n)^{1-1/\gamma} \to 0.
$$

For $\lambda \in(\pi^2,4\pi^2)$, the Dirichlet Green's function corresponding to $-\partial_x^2-\lambda$ is
$$
 G_\lambda(x,y)=
 \frac{\sin(\sqrt{\lambda}\min\{x,y\})\,
       \sin(\sqrt{\lambda}(1-\max\{x,y\}))}{\sqrt{\lambda}\sin \sqrt{\lambda}},
$$
see, e.g., \cite[Example~3.3.3]{duffy}. 
Let us find the sign of $G_\lambda$. 
Since $\sqrt{\lambda} \in (\pi,2\pi)$, we have $\sin \sqrt{\lambda}<0$. 
Assume now that $\sqrt{\lambda} \in (\pi, \pi/(2/3+\varepsilon_n))$. 
Then for every $y\in I_n$ both $\sqrt{\lambda}y$ and $\sqrt{\lambda}(1-y)$ belong to $(0,\pi)$.
Since $\min\{x,y\}\leq y$ and $1-\max\{x,y\}\leq1-y$, it follows that $G_\lambda(x,y)<0$ for all $x\in(0,1)$ and $y\in I_n$.
Therefore, the unique solution $u$ of \eqref{D} satisfies $u(x)=\int_{I_n}G_\lambda(x,y)\,dy<0$ for $x \in \Omega$.   
By differentiating $G_\lambda$ at the endpoints, we get
$$
 u'(0)=\int_{I_n}\frac{\sin(\sqrt{\lambda}(1-y))}{\sin \sqrt{\lambda}}\,dy<0,\qquad
 u'(1)=-\int_{I_n}\frac{\sin(\sqrt{\lambda}y)}{\sin \sqrt{\lambda}}\,dy>0.
$$
That is, $u$ satisfies \ref{AMP} whenever $\sqrt{\lambda} \in (\pi, \pi/(2/3+\varepsilon_n))$. 
Consequently, $\lambda_{f_n}\geq\pi^2/(2/3+\varepsilon_n)^2 \geq \lambda_1 + \delta$ for some $\delta>0$ which does not depend on $n$. 
\end{proof}

\section{Proofs}\label{sec:proofs}

\begin{proof}[Proof of Theorem~\ref{thm:UAMP}]
Let $\eta>0$. 
Assume, by contradiction to the claim of the theorem, that there exists a sequence $\{f_n\} \subset \mathcal{U}_\eta$ such that $\lambda_{f_n} \searrow \lambda_1$. 
Since $\lambda_{c f_n}=\lambda_{f_n}$ for any $c > 0$ (see \eqref{eq:lambdacf}), we can additionally assume that $\|f_n\|_\gamma=1$. 
Consequently, $f_n \to f$ weakly in $L^\gamma(\Omega)$ for some $f \in L^\gamma(\Omega)$, up to a subsequence. 
Lemma~\ref{lem:properties-of-U} further yields $f \in \mathcal{U}_\eta$. 
Recall that the problem \eqref{D} has a solution for any $\lambda \in (\lambda_1,\lambda_2)$.
Thus, by the definition of $\lambda_{f_n}$, there exists a sequence $\widetilde{\lambda}_n \searrow \lambda_1$ such that each $\widetilde{\lambda}_n \geq \lambda_{f_n}$ and a corresponding solution $u_n$ of \eqref{eq:Dn} violates \ref{AMP}. 
That is, $u_n$ does not satisfy $u_n<0$ in $\Omega$ or $\partial_\nu u_n >0$ on $\partial \Omega$. 
In either case, we get a contradiction to Theorem~\ref{thm:AMP}.
\end{proof}

\begin{proof}[Proof of Theorem~\ref{thm:LSC}]
Since both $\{f_n\}$ and $f$ satisfy \eqref{eq:F}, and $f_n \to f$ weakly in $L^\gamma(\Omega)$, there exists $\eta>0$ such that $\{f_n\} \subset \mathcal{U}_\eta$ and $f \in \mathcal{U}_\eta$. 
Indeed, it is sufficient to observe that 
$$
\Theta(f_n) 
= 
\frac{\int_\Omega f_n \varphi_1\,dx}{\|f_n\|_\gamma} 
\geq
\frac{\inf_n \int_\Omega f_n \varphi_1\,dx}{\sup_n \|f_n\|_\gamma} = \text{const}>0. 
$$
Thus, by Theorem~\ref{thm:UAMP}, there exists $\varepsilon>0$ such that $\lambda_f, \lambda_{f_n} \geq \lambda_1+\varepsilon$ for any $n$. 
If $(\lambda_1+\varepsilon,\min\{\lambda_f,\lambda_2\}) = \emptyset$, then the proof is complete. 
Otherwise, we fix any $a \in (\lambda_1+\varepsilon,\min\{\lambda_f,\lambda_2\})$. 
Let us show that
\begin{equation}\label{eq:proof:lws1}
 a\le \liminf_{n\to+\infty}\lambda_{f_n}.
\end{equation}
Suppose, by contradiction, that \eqref{eq:proof:lws1} does not hold.
That is, passing to a subsequence of indices, there exists a sequence $\{\widetilde{\lambda}_n\} \subset [\lambda_1+\varepsilon,a]$ and a sequence $\{u_n\}$ of corresponding solutions of the problem \eqref{eq:Dn} such that each $u_n$ does not satisfy $u_n<0$ in $\Omega$ or $\partial_\nu u_n >0$ on $\partial \Omega$. 
Passing to a further subsequence, we may assume that $\widetilde{\lambda}_n\to\lambda^*\in[\lambda_1+\varepsilon,a] \subset (\lambda_1,\lambda_2)$.

By Proposition~\ref{prop:AMP-convergence}, $u_n \to u$ in $C^1(\overline\Om)$, where $u$ is a solution of \eqref{eq:D0}. 
Since $\lambda^* \le a < \lambda_f$, \ref{AMP} says that $u<0$ in $\Omega$ and $\partial_\nu u > 0$ on $\partial \Omega$, and hence the same inequalities must be satisfied for $u_n$ with any sufficiently large $n$. 
This is a contradiction to our initial assumption on $u_n$, and hence \eqref{eq:proof:lws1} holds. 
Since $a \in (\lambda_1+\varepsilon,\min\{\lambda_f,\lambda_2\})$ was arbitrary, we send $a \to \min\{\lambda_f,\lambda_2\}$ and conclude the desired result. 
\end{proof}

\begin{proof}[Proof of Proposition~\ref{prop:upperbound} and Corollary~\ref{cor:eamp}]
	Let us take any $\lambda>\lambda_1$ such that \eqref{D} has a solution $u < 0$ in $\Omega$. 
	Since $\Omega_f^0 \neq \emptyset$, we fix any nonnegative function $\varphi \in C_0^{\infty}(\Omega_f^0)$ and extend it by zero outside of $\Omega_f^0$. 
	Recalling that $u \in C^1(\Omega)$, we have $u < 0$ in $\text{supp}\,\varphi$, and hence 
	$\varphi^p/(-u)^{p-1} \in \W(\Omega) \cap C^1(\Omega)$. 
	Applying the Picone inequality \cite{Alleg} in the weak formulation of \eqref{D}, we obtain
	\begin{align}
		-\intO |\nabla \varphi|^{p} \,dx
		&\leq
		\intO |\nabla u|^{p-2} \nabla u \cdot \nabla \left(\frac{\varphi^p}{(-u)^{p-1}}\right) dx
		\\
		\label{eq:prop:weak1}
		&=\lambda
		\intO \frac{|u|^{p-2}u}{(-u)^{p-1}}\,
		\varphi^p\, dx
		+
		\intO f
		\frac{\varphi^{p}}{(-u)^{p-1}} \,dx
		=
		-\lambda \intO \varphi^p\, dx.
	\end{align}
	Therefore,
	$$
	\lambda \int_{\Omega_f^0} \varphi^p \,dx \leq \int_{\Omega_f^0} |\nabla \varphi|^{p} \,dx.
	$$
	Since the nonnegative function $\varphi \in C_0^\infty(\Omega_f^0)$ was arbitrary, we derive $\lambda \leq \lambda_1(\Omega_f^0)$ from \eqref{eq:lambda1} and \cite[Lemma~1.23]{HKM}.
\end{proof}

\begin{proof}[Proof of Proposition~\ref{prop:convexity}]
Fix any $a\in(\lambda_1,\lambda_2)$ and $\lambda\in(\lambda_1,a)$. 
Denote by $u_{f}$ the (unique) solution of \eqref{D}. 
Since $p=2$, we have
$$
u_{sf+tg}
=
s u_{f}
+
t u_{g}  \quad \text{for any}~ s,t \in \mathbb{R}.
$$
If $f,g\in\mathcal A_a$, then $u_{f}, u_{g}<0$ in $\Omega$ and $\partial_\nu u_f,\partial_\nu u_g > 0$ on $\partial\Omega$, and hence $u_{sf+tg}$ satisfies the same inequalities, provided $s,t\ge0$, $s+t>0$. 
Since this holds for every $\lambda\in(\lambda_1,a)$, we get $sf+tg\in\mathcal A_a$. 
Noting that, in the linear case, $\lambda_f, \lambda_g \leq \lambda_2$ (see, e.g., \cite[Theorem~1.1]{BDI}), we send $a$ to $\min\{\lambda_f,\lambda_g\}$ and arrive at $\min\{\lambda_f,\lambda_g\} \leq \lambda_{sf+tg}$. 
\end{proof}

\begin{proof}[Proof of Proposition~\ref{prop:eamp-linear}]
Let $m \in \mathbb{N}$. 
Since $\varphi_i \in C^{1,\beta}(\overline{\Omega})$ for any $i \in \mathbb{N}$, and $\varphi_1$ satisfies \eqref{eq:bpl}, there exists $M>0$ such that 
\begin{equation}\label{eq:eamp:0}
|\varphi_i| \leq M \varphi_1 \quad \text{in}~ \Omega \quad \text{for each}~ i=1,\ldots,m.
\end{equation}
Let $\{\lambda_k\}$ be the spectrum of the Dirichlet Laplacian in $\Omega$. 
Take any $\lambda>\lambda_m$ and $c_1,\ldots,c_m\in\mathbb{R}$, and consider the function
$$
  u_\lambda:=-\sum_{i=1}^m\frac{c_i}{\lambda-\lambda_i}\varphi_i.
$$
We see that $u_\lambda \in W_0^{1,2}(\Omega) \cap C^{1,\beta}(\overline{\Omega})$. 
Since $-\Delta\varphi_i=\lambda_i\varphi_i$, we obtain 
\begin{equation}\label{eq:eamp:1}
  -\Delta u_\lambda - \lambda u_\lambda
  = \sum_{i=1}^m c_i\varphi_i
  =:
  f \quad \text{in}~ \Omega. 
\end{equation}
That is, $u_\lambda$ is a solution of \eqref{D}. 
Assume that $\lambda\geq 2\lambda_m$. 
In view of \eqref{eq:eamp:0} and \eqref{eq:eamp:1}, we have 
$$
  |\lambda u_\lambda+f|
  =
  |\Delta u_\lambda| 
  =
  \left|\sum_{i=1}^m\frac{c_i\lambda_i}{\lambda-\lambda_i}\varphi_i\right|
  \leq 
  2M\frac{\lambda_m}{\lambda} \sum_{i=1}^m |c_i| \,\varphi_1 \quad \text{in}~ \Omega.
$$
Assume now that $c_1,\ldots,c_m\in\mathbb{R}$ are chosen so that $f\geq \kappa\varphi_1$ in $\Omega$ for some $\kappa>0$. 
Then we obtain 
$$
  \lambda u_\lambda \leq \left(-\kappa + 2M\frac{\lambda_m}{\lambda} \sum_{i=1}^m |c_i|\right) \varphi_1  \quad \text{in}~ \Omega.
$$
Hence, for any sufficiently large $\lambda$, we get $u_\lambda<0$ in $\Omega$, which yields $\widetilde\lambda_f=+\infty$.
\end{proof}

\begin{proof}[Proof of Theorem~\ref{thm:anticomparison}]
Unlike the argument in \cite{CuestaTakac-strongcomparison1}, our approach is based on the asymptotic profile of negative solutions as $\lambda \searrow \lambda_1$ obtained in Theorem~\ref{thm:AMP}.
Since $0 \leq f \leq g$, $f \not\equiv 0$, $f \not\equiv g$, and $\varphi_1>0$ in $\Omega$, we have
\begin{equation}\label{eq:proof:ACP1}
0 < \intO f \varphi_1 \,dx < \intO g \varphi_1 \,dx.
\end{equation}
Suppose, contrary to the claim of the theorem, that there exists a decreasing sequence $\widetilde{\lambda}_n  \to \lambda_1$ and solutions $u_{n}, v_n$ of \eqref{eq:D1}, \eqref{eq:D2} (with $\lambda=\widetilde{\lambda}_n$), respectively, such that \eqref{eq:thm:ACP} does not hold. 
Observe that $u_n,v_n < 0$ in $\Omega$ and $\partial_\nu u_n$, $\partial_\nu v_n > 0$ on $\partial \Omega$ for any sufficiently large $n$, in view of \ref{AMP}. 
Applying Theorem~\ref{thm:AMP} to $u_n$ and $v_n$, and then subtracting the corresponding expressions \eqref{eq:Un}, we arrive at
\begin{equation}
 (\widetilde{\lambda}_n-\lambda_1)^{1/(p-1)} (v_n-u_n)
 \to 
 -\left[
 \left(\int_\Omega g\varphi_1\,dx\right)^{1/(p-1)}
-
\left(\int_\Omega f\varphi_1\,dx\right)^{1/(p-1)}
 \right]
 \varphi_1
 \quad\text{in }C^1(\overline\Omega).
\end{equation}
Therefore, by \eqref{eq:proof:ACP1} and \ref{AMP}, we get $v_n<u_n < 0$ in $\Omega$ and $\partial_\nu v_n > \partial_\nu u_n > 0$ on $\partial\Omega$ for any sufficiently large $n$. 
This contradiction completes the proof. 
\end{proof}

\begin{proof}[Proof of Proposition~\ref{prop:anticomparison-linear}] 
Denote $\Lambda_{f,g} = \min\{\lambda_f,\lambda_{g-f}\} \leq \lambda_2$. 
Let us take any $\lambda \in (\lambda_1, \Lambda_{f,g})$ and let $u,v$ be the solutions of \eqref{eq:D1}, \eqref{eq:D2}, respectively. 
Since $\lambda<\lambda_{f}$, \ref{AMP} applies to $u$. 
By the linearity, the function $w=v-u$ solves
$$
 -\Delta w=\lambda w+(g-f) \quad \text{in}~ \Omega, \quad w=0 \quad \text{on }\partial\Omega.
$$
Since $f \leq g$, $f\not\equiv g$ in $\Omega$, and $\lambda<\lambda_{g-f}$, \ref{AMP} gives $w<0$ in $\Omega$ and $\partial_\nu w > 0$ on $\partial\Omega$. 
That is, the anticomparison principle \eqref{eq:thm:ACP} holds for any $\lambda \in (\lambda_1, \Lambda_{f,g})$. 

Let $\lambda=\Lambda_{f,g}$ and assume that $\Lambda_{f,g}=\lambda_f$. 
Then there exists a solution of \eqref{D} which violates \ref{AMP}. Indeed, if $\lambda_f=\lambda_2$, then \cite[Proposition~1.2 (i)]{BDI} implies that $\int_\Omega f \varphi_2\,dx=0$ for any second eigenfunction $\varphi_2$. 
The Fredholm alternative yields the existence of a continuum of solutions of \eqref{D} of the form $u + c\varphi_2$. Since any $\varphi_2$ is sign-changing, so is $u + c\varphi_2$ for an appropriate $c \in \mathbb{R}$. 
If $\lambda_f < \lambda_2$ and we suppose that the solution $u$ of \eqref{D} (with $\lambda=\lambda_f$) satisfies \ref{AMP}, then the continuity result of Proposition~\ref{prop:AMP-convergence} implies that the solutions of \eqref{D} also satisfy \ref{AMP} for any $\lambda$ in a sufficiently small neighborhood of $\lambda_f$, which contradicts the maximality of $\lambda_f$, see \eqref{eq:lambdaf}. 
The same arguments cover the case $\lambda=\Lambda_{f,g}=\lambda_{g-f}$. 
Therefore, we conclude that if $\lambda=\Lambda_{f,g}$, then \eqref{eq:thm:ACP} does not hold, i.e., $\Lambda_{f,g}$ is the endpoint of validity of the anticomparison principle.
\end{proof}

\begin{proof}[Proof of Theorem~\ref{thm:AMP-morrey}]
Recall that we assume $p=2$. 
We start by discussing a few auxiliary facts. 
Since $f\in L^{q,\mu}(\Omega)$, we have $f \in L^q(\Omega)$ by \eqref{eq:propertiesMorrey}, and hence the problem
\begin{equation}\label{eq:Dmorrey}
-\Delta v = f(x) ~\text{in}~ \Omega, \quad v=0 ~\text{on}~\partial\Omega,
\end{equation}
has a unique strong solution $v \in W^{2,q}(\Omega)\cap W^{1,q}_0(\Omega)$, and $v$ satisfies
\begin{equation}\label{eq:laplacecontind}
\|v\|_{W^{2,q}(\Omega)} \leq C \|f\|_q,
\end{equation}
where $C>0$ does not depend on $f$, see \cite[Theorem~9.15 and Lemma~9.17]{GT}. 
This implies that the operator $-\Delta:W^{2,q}(\Omega)\cap W^{1,q}_0(\Omega)\to L^q(\Omega)$ is an isomorphism, and hence the inverse operator is continuous by \eqref{eq:laplacecontind}. Moreover, thanks to the compactness of the embedding $W^{2,q}(\Omega) \hookrightarrow L^q(\Omega)$, the inverse operator $(-\Delta)^{-1}: L^q(\Omega) \to L^q(\Omega)$ is compact. 
Consequently, by the Fredholm alternative, the problem \eqref{D} has a unique strong solution $u$, provided $\lambda$ is not an eigenvalue. 
In particular, such $u$ exists and is unique for any $\lambda\in(\lambda_1,\lambda_2)$. 
In what follows, by referring to the strong solution of \eqref{D} or \eqref{eq:Dmorrey}, we always mean that it belongs to $W^{2,q}(\Omega)\cap W^{1,q}_0(\Omega)$. 

By \cite[Corollary~4.1]{DPR1999} and the assumptions \eqref{eq:morrey:assumptions}, the strong solution of \eqref{eq:Dmorrey} actually belongs to $C^{1,\alpha}(\overline{\Omega})$, where $\alpha=1-(N-\mu)/q \in (0,1)$. 
Although it is not stated explicitly in \cite{DPR1999}, we also have an a priori $C^{1,\alpha}(\overline{\Omega})$-estimate. 
Namely, by the proof of \cite[Theorem~3.4]{DPR1999}, the operator $-\Delta:W^{2,q,\mu}(\Omega)\cap W^{1,q}_0(\Omega)\to L^{q,\mu}(\Omega)$ is continuous, where $W^{2,q,\mu}(\Omega)$ is the Banach space of functions $u \in W^{2,q}(\Omega)$ such that $D^2u \in L^{q,\mu}(\Omega)$ and equipped with the norm
$$
\|u\|_{W^{2,q,\mu}(\Omega)} = \|u\|_{q} + \|D^2u\|_{q,\mu},
$$
see \cite[Section~2]{DPR1999}. 
Therefore, by \cite[Theorem~3.4 and Corollary~4.1]{DPR1999}, \textit{every} $u \in W^{2,q,\mu}(\Omega)\cap W^{1,q}_0(\Omega)$ belongs to $C^{1,\alpha}(\overline{\Omega})$. 
Consider now the embedding operator $J: W^{2,q,\mu}(\Omega)\cap W^{1,q}_0(\Omega) \to C^{1,\alpha}(\overline{\Omega})$ defined by $Ju=u$. 
If $u_n \to u$ in $W^{2,q,\mu}(\Omega)\cap W^{1,q}_0(\Omega)$ and $u_n \to w$ in $C^{1,\alpha}(\overline{\Omega})$, then both convergences imply the convergence in $L^q(\Omega)$, which yields $u=w$ a.e.\ in $\Omega$. 
In other words, the graph of $J$ is closed. 
Since both $W^{2,q,\mu}(\Omega)\cap W^{1,q}_0(\Omega)$ and $C^{1,\alpha}(\overline{\Omega})$ are Banach spaces, the closed graph theorem then states that $J$ is continuous, that is, there exists $C_1>0$ such that
\begin{equation}\label{eq:proof:morrey1}
\|u\|_{C^{1,\alpha}(\overline{\Omega})} \leq C_1 \|u\|_{W^{2,q,\mu}(\Omega)} 
\quad \text{for any}~ u \in W^{2,q,\mu}(\Omega)\cap W^{1,q}_0(\Omega).
\end{equation}
On the other hand, by the proof of \cite[Theorem~3.4]{DPR1999}, the inverse of the operator $-\Delta:W^{2,q,\mu}(\Omega)\cap W^{1,q}_0(\Omega)\to L^{q,\mu}(\Omega)$ is also continuous, and hence, in addition to \cite[Theorem~3.4, Eq.(3.8)]{DPR1999}, the strong solution $v$ of \eqref{eq:Dmorrey} satisfies
\begin{equation}\label{eq:proof:morrey2}
\|v\|_{W^{2,q,\mu}(\Omega)} \leq C_2 \|f\|_{q,\mu},
\end{equation}
where $C_2>0$ does not depend on $f$.  
Combining \eqref{eq:proof:morrey1} and \eqref{eq:proof:morrey2}, we conclude that 
\begin{equation}\label{eq:proof:morrey3}
\|v\|_{C^{1,\alpha}(\overline{\Omega})} \leq C_1 C_2 \|f\|_{q,\mu}.
\end{equation}

\medskip
Suppose now, by contradiction to \ref{AMP}, that there exists a decreasing sequence 
$\widetilde{\lambda}_n \to \lambda_1$ and the corresponding sequence $\{u_n\}$ of strong solutions
of \eqref{eq:Dn} such that each $u_n$ does not satisfy  $u_n < 0$ in $\Omega$ or $\partial_\nu u_n > 0$ on $\partial \Omega$. 
We have $ \|u_n\|_{\infty} \to +\infty$. 
Indeed, if $\{u_n\}$ is bounded in $L^\infty(\Omega)$, up to a subsequence, then the sequence of the right-hand sides $\{\widetilde{\lambda}_nu_n+f\}$ of \eqref{eq:Dn} is bounded in $L^{q,\mu}(\Omega)$, and hence $\{u_n\}$ is bounded in $C^{1,\alpha}(\overline{\Omega})$ by \eqref{eq:proof:morrey3}. 
Consequently, the Arzel\`a-Ascoli theorem yields $u_n \to u$ in $C^1(\overline{\Omega})$ for some $u$, up to a subsequence. 
Passing to the limit in the weak formulation of \eqref{eq:Dn}, we further conclude that $u$ is a \textit{weak} solution of \eqref{D} with $\lambda=\lambda_1$, i.e.,
\begin{equation}\label{eq:Dlambda1-morry}
 -\Delta u=\lambda_1u+f(x)\quad\text{in }\Omega,
 \quad u=0\quad\text{on }\partial\Omega.
\end{equation}
Testing the weak formulation of \eqref{eq:Dlambda1-morry} by $\varphi_1$, we derive
$$
 \int_\Omega f\varphi_1\,dx
 =\int_\Omega\nabla u\cdot\nabla\varphi_1\,dx
  -\lambda_1\int_\Omega u\varphi_1\,dx=0,
$$
which contradicts our assumption $\int_\Omega f\varphi_1\,dx>0$. 
Therefore, $ \|u_n\|_{\infty} \to +\infty$. 

Consider the normalized function $w_n = u_n/\|u_n\|_\infty$, so that $w_n$ is a strong solution of 
\begin{equation}\label{eq:Dnmorry2}
 -\Delta w_n=\widetilde{\lambda}_n w_n+\frac{f}{\|u_n\|_\infty}
 \quad\text{in }\Omega,
 \quad w_n=0\quad\text{on }\partial\Omega.
\end{equation}
The right-hand sides of \eqref{eq:Dnmorry2} are uniformly bounded in
$L^{q,\mu}(\Omega)$, and therefore, as above, $\{w_n\}$ is bounded in $C^{1,\alpha}(\overline{\Omega})$ and the Arzel\`a-Ascoli theorem implies $w_n \to w$ in $C^1(\overline{\Omega})$ for some $w$, up to a subsequence. 
Notice that $\|w\|_\infty=1$, so that $w$ is nontrivial. 
By passing to the limit in the weak formulation of \eqref{eq:Dnmorry2} and recalling that $\widetilde{\lambda}_n \to \lambda_1$, we conclude that $w$ is the first eigenfunction, that is, $w=c\varphi_1$ for some $c\ne0$.

Let us find the sign of $c$. Testing the weak formulation of \eqref{eq:Dnmorry2} by $\varphi_1$, we obtain
$$
 (\lambda_1-\widetilde{\lambda}_n)\int_\Omega w_n\varphi_1\,dx
 =
 \frac{1}{\|u_n\|_\infty}\int_\Omega f\varphi_1 \,dx
  >0.
$$
Since $\widetilde{\lambda}_n>\lambda_1$, it follows that $\int_\Omega w_n\varphi_1\,dx<0$, which yields $c<0$. 
Recalling that $\varphi_1$ satisfies \eqref{eq:bpl} and $w_n \to c\varphi_1$ in $C^1(\overline{\Omega})$, we conclude that 
$$
 w_n<0\quad\text{in }\Omega
 \quad \text{and} \quad 
  \partial_\nu w_n>0\quad\text{on }\partial\Omega
$$
for any sufficiently large $n$. 
Clearly, the same inequalities also hold for $u_n$, contradicting
the choice of the sequence. The proof is complete.
\end{proof}

\bigskip
\noindent
\textbf{Acknowledgments and historical remarks.} 
Section~\ref{sec:properties} mainly addresses questions raised by Evgeny~Yu.~Panov and Andrey~L.~Piatnitski during the O.A.~Ladyzhenskaya centennial conference on PDEs 2022 at the Euler International Mathematical Institute, St.~Petersburg. 
The idea of Section~\ref{sec:EAMP} emerged during the Seminar in Partial Differential Equations (SPDE) 2023 at the IIT Palakkad (India), and the author is thankful to Ashok Kumar for a discussion. 
The investigation of Section~\ref{sec:anticomparison} was proposed by the author back in 2014 when writing an application for a regional youth research grant. (That application was not supported.)  
Finally, the problem of obtaining \ref{AMP} in a scale of finer function spaces (see Section~\ref{sec:Morrey}) was explicitly raised in \cite[Section~1.1]{BobkovTanakaAM}. 
The author is thankful to Mieko Tanaka for related discussions. 
The author was supported in the framework of the development program of the Scientific Educational Mathematical Center of the Volga Federal District (agreement No.\ 075-02-2026-1332).

\phantomsection

\end{document}